\newcommand{\Dom}{\mathrm{Dom}}
\newcommand{\Ran}{\mathrm{Ran}}
\newcommand{\w}{\mathbf{w}}
\newcommand{\wmin}{\w_{\rm min}}
\newtheorem{theorem}{Theorem}
\newtheorem{lemma}[theorem]{Lemma}
\newtheorem{proposition}[theorem]{Proposition}
\newtheorem{corollary}[theorem]{Corollary}
\begin{document}

\title[ ]{Cutoff for permuted Markov chains}

\author[ ]{Anna Ben-Hamou}
\author[ ]{Yuval Peres}

\begin{abstract}
Let $P$ be a bistochastic matrix of size $n$, and let $\Pi$ be a permutation matrix of size $n$. In this paper, we are interested in the mixing time of the Markov chain whose transition matrix is given by $Q=P\Pi$. In other words, the chain alternates between random steps governed by $P$ and deterministic steps governed by $\Pi$. We show that if the permutation $\Pi$ is chosen uniformly at random, then under mild assumptions on $P$, with high probability, the chain $Q$ exhibits cutoff at time $\frac{\log n}{\bh}$, where $\bh$ is the entropic rate of~$P$. Moreover, for deterministic permutations, we show that the upper bound on the mixing time obtained by~\citet*{chatterjee2020speeding} may be improved using a result of~\citet*{chen2013comparison} that allows to do away with the dependence on the laziness parameter.
\end{abstract}

\maketitle
\section{Introduction}

\subsection{Setting and main results}

Let $P$ be a bistochastic transition matrix on a finite state space $\Omega$ of size $n$, and let $\pi$ be a permutation on $\Omega$. Denote by $\Pi$ the corresponding permutation matrix (defined by $\Pi_{i,j}=\1_{\{j=\pi(i)\}}$). We are interested in the mixing time of the Markov chain with transition matrix $Q=P\Pi$. By assumption, the uniform distribution on $\Omega$ is stationary for $P$, and it is stationary for $Q$ as well. The total-variation distance to equilibrium at time $t\geq 1$ starting from state $x\in\Omega$ is then given by
\[
\cD_x(t)=\sum_{y\in\Omega} \left( \frac{1}{n}-Q^t(x,y)\right)_+\, .
\]
For $\varepsilon\in (0,1)$, the $\varepsilon$-mixing time is defined as
\[
\tmix(\varepsilon)=\min\{ t\geq 0,\, \cD(t)\leq \varepsilon\}\, ,
\]
where $\cD(t)=\max_{x\in\Omega} \cD_x(t)$. The original Markov chain $P$ might have a very large mixing time (it might not even mix at all, if $P$ is reducible or periodic), and it is natural to ask which permutations on the state space are able to speed up convergence to stationarity. Consider for instance the lazy random walk on the circle $\bbZ_n$ given by $X_0=0$ and, for $k\geq 0$,
\[
X_{k+1}=X_k+\varepsilon_{k+1} \qquad (\mathrm{mod} \; n)\, ,
\]
where $(\varepsilon_k)_{k\geq 1}$ is an i.d.d.\ sequence uniformly distributed on $\{-1,0,1\}$. It is well known that this chain mixes in order $n^2$ steps (and does not have cutoff). Now consider alternating random uniform jumps in $\{-1,0,1\}$ and deterministic jumps given by some bijection $f$ on $\bbZ_n$. This corresponds to the permuted Markov chain
\[
X_{k+1}=f(X_k)+\varepsilon_{k+1} \qquad (\mathrm{mod} \; n)\, .
\]
\citet*{he2021mixing} recently proved that, for $n$ prime and for $f(x)=x^{-1}$ when $x\neq 0$, and $f(0)=0$, this chain takes only order $\log n$ steps to mix. For $f(x)=2x$, it was shown by \citet*{eberhard2021mixing} that the chain has cutoff at time $c\log n$, for some absolute constant $c>0$.

In the general setting, \citet*{chatterjee2020speeding} recently showed that, under some assumptions on $P$, if the permutation $\pi$ satisfies some expansion condition with respect to $P$, then the mixing time of $Q=P\Pi$ is logarithmic, and that all but a vanishing fraction of permutations satisfy this expansion condition.

We refine this result by showing that, under mild assumptions on the matrix $P$, for almost every permutation $\pi$, the chain $Q=P\Pi$ has cutoff at a time $\frac{\log n}{\bh}$, where $\bh$ is the entropic rate of $P$, that is
\begin{equation}\label{eq:def-h}
\bh=\frac{1}{n}\sum_{x,y\in\Omega}P(x,y)\log\frac{1}{P(x,y)}\,  .
\end{equation}
To be more precise, this result holds for a sequence of chains $Q_n=P_n\Pi_n$ on $\Omega_n$ with $|\Omega_n|=n$, all asymptotic statements being understood as $n$ tends to $+\infty$. To alleviate notation, we will later keep this dependence in $n$ implicit.

\begin{theorem}\label{thm:cutoff-random}
Let $P_n$ be a bistochastic transition matrix on $\Omega_n$ with $|\Omega_n|=n\geq 1$, and let $\bh_n$ be the corresponding entropy rate. Assume that
\begin{equation}\label{assum:sparse}
\delta_n=\min\left\{ P_n(x,y) ,\; x,y\in\Omega_n , P_n(x,y)>0\right\}  =\frac{1}{n^{o(1)}}\, ,
\end{equation}

\begin{equation}\label{assum:branching}
\Delta=\sup_{n\geq 1}\max_{x,y\in\Omega_n}P_n(x,y)<1\, ,
\end{equation}

\begin{equation}\label{assum:loglog}
\bh_n=o\left(\frac{\log n}{\log\log n}\right)\, .
\end{equation}
Let $\pi_n$ be a uniform random permutation on $\Omega_n$, and $\Pi_n$ be the corresponding permutation matrix. Then, for all $\varepsilon\in (0,1)$, the mixing time of the chain $Q_n=P_n\Pi_n$ satisfies
\[
\frac{\bh_n\tmix^{(n)}(\varepsilon)}{\log n} \, \underset{n\to +\infty}{\overset{\P}{\longrightarrow}}\,  1 \, .
\]
\end{theorem}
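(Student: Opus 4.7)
The overall strategy splits into a lower bound valid for \emph{every} permutation (proved by entropy) and an upper bound that exploits the randomness of $\pi$ through a lazy revelation of $\pi$ along the trajectory.

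\textbf{Lower bound.} Since $\Pi$ is a permutation matrix, each row of $Q = P\Pi$ is merely a reordering of the corresponding row of $P$, so $Q$ and $P$ have the same row entropies. Writing $Y_s := \pi^{-1}(X_{s+1}) \sim P(X_s, \cdot)$, the telescoping log-likelihood $L_t := \sum_{s=0}^{t-1} \log(1/P(X_s, Y_s))$ equals $\log(1/Q^t(X_0, X_t))$ path by path, and has mean $t\bh_n$ when $X_0$ is uniform. By assumption \eqref{assum:sparse} its increments are bounded by $\log(1/\delta_n) = o(\log n)$, so Azuma--Hoeffding gives $L_t = (1+o(1))t\bh_n$ with high probability. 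Consequently $Q^t(x, \cdot)$ is essentially supported on a set of size $e^{(1+o(1))t\bh_n}$, which is $o(n)$ when $t\bh_n \le (1-\varepsilon)\log n$, forcing the total-variation distance to uniform close to $1$. This bound holds for every $\pi$.

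\textbf{Upper bound via lazy revelation of $\pi$.} Write each step as $X_{s+1} = \pi(Y_s)$ with $Y_s \sim P(X_s, \cdot)$, and build $\pi$ on the fly: $\pi(y)$ is sampled only at the first time step that queries it, uniformly from the outputs not yet assigned. On the \emph{injectivity event} $I_t := \{Y_0, \ldots, Y_{t-1}\text{ are pairwise distinct}\}$, the terminal image $X_t = \pi(Y_{t-1})$ is uniform on the $n - O(t)$ still-unused outputs, so $\cD_x(t) \le \mathbb{P}_x(I_t^c) + O(t/n)$. Taking $t = (1+\varepsilon)\log n/\bh_n$ the additive $O(t/n)$ term is $o(1)$, and everything reduces to bounding $\mathbb{P}_x(I_t^c)$.

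\textbf{Main obstacle: collision control.} The heart of the proof is the annealed collision bound on $I_t^c$. Before the first collision, the lazy revelation of $\pi$ makes $X_s$ approximately uniform on $\Omega$ for $s \ge 1$; hence the pre-image sequence $(Y_0, \ldots, Y_{t-1})$ is asymptotically distributed as a stationary $P$-trajectory, and the same entropy concentration as in the lower bound places it inside a typical set $\mathcal{T}_t$ of cardinality $e^{(1+o(1))t\bh_n}$. A union bound over the $\binom{t}{2}$ pairs then gives $\mathbb{P}_x(I_t^c) \lesssim t^2/|\mathcal{T}_t| + o(1)$, which at $t = (1+\varepsilon)\log n/\bh_n$ is of order $t^2 \cdot n^{-1-\varepsilon/2} = o(1)$. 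Assumption \eqref{assum:loglog} is invoked here to make the entropy concentration sharp enough on the time scale $t = \Theta(\log n/\bh_n)$, ensuring $|\mathcal{T}_t|$ outgrows $n$ with only $o(1)$ relative error; assumption \eqref{assum:branching} rules out chains where a single row of $P$ is too peaked and could otherwise create ``trapped'' atypical trajectories with unusually many collisions. Establishing that the annealed law of $(Y_s)$ really is close to that of a stationary $P$-walk, uniformly over starting states $x$ and up to the collision time, is the technical crux: this is where martingale and second-moment estimates must be combined with the combinatorics of lazy-revelation of $\pi$.
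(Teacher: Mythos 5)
Your lower bound sketch follows the same broad line as the paper (entropy of a typical path plus a counting estimate), but two steps are off. The identity "$L_t=\log(1/Q^t(X_0,X_t))$ path by path" is false: $\prod_sP(X_s,Y_s)$ is the probability of one path, while $Q^t(X_0,X_t)$ sums over all paths, so you only have $L_t\ge -\log Q^t(X_0,X_t)$ (which happens to be the useful direction). More importantly, Azuma--Hoeffding does not give the needed concentration under assumption~\eqref{assum:sparse}: the increment bound is $\log(1/\delta_n)=o(\log n)$, so Azuma controls deviations of $L_t$ only at scale $\sqrt{t}\log(1/\delta_n)$, which at $t\asymp\log n/\bh_n$ can greatly exceed $\log n$ when $\bh_n$ is small. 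The paper's Chebyshev step works precisely because $\bsigma^2\le\bh_n\log(1/\delta_n)=o(\bh_n\log n)$ — the second moment is small even though the sup of the increments is not, a distinction Azuma cannot exploit. Your claim that this lower bound "holds for every $\pi$" is also unjustified: the concentration of $L_t$ around $t\bh_n$ requires $X_s$ to be approximately uniform, which the paper obtains only via the coupling with i.i.d.\ samples under the \emph{random} $\pi$.

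The serious gap is the upper bound. The inequality "$\cD_x(t)\le\P_x(I_t^c)+O(t/n)$" bounds the \emph{annealed} TV distance, i.e.\ the distance of $\E_\pi Q^t(x,\cdot)$ from uniform, not the quenched quantity $\cD_x(t)$ that the theorem concerns. Averaging over $\pi$ cannot control the quenched distance: positive and negative deviations of $Q^t(x,\cdot)-1/n$ cancel under $\E_\pi$ but contribute in full to $\cD_x(t)$. (If $Q^t(x,\cdot)$ were $2/n$ on a uniformly random half of $\Omega$ and $0$ elsewhere, the annealed measure would be exactly uniform while $\cD_x(t)=1/2$ for every $\pi$.) Establishing the quenched upper bound is the technical core of the paper: it requires the two-sided exploration — growing $\cT_x$ forward from $x$ under $Q$ and $\cT_y$ backward from $y$ under $Q_\star=P_\star\Pi^{-1}$ — together with the decomposition $Q^t(x,\pi(y))=\sum_{u,v}Q^{t/2}(x,u)Q_\star^{t/2}(y,v)\1_{\{v=\pi^{-1}(u)\}}$ and Chatterjee's concentration inequality for random permutations, which gives $nQ^t(x,\pi(y))\ge 1-o(1)$ uniformly over $x\in\cR$, $y\in\cR_\star\setminus\cB_x$ with failure probability $o(n^{-2})$ so that a union bound can be taken. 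Lemmas~\ref{lem:large-weights}, \ref{lem:non-free} and \ref{lem:small-weights} then account for the weight lost to heavy, non-free, and light stubs so that $Z_\theta(x,y)=1-o(1)$. Your collision bound is a genuine and necessary ingredient — it is essentially the coupling estimate~\eqref{eq:coupling} and enters Lemma~\ref{lem:non-free} — but it is far from sufficient, and the typical-set reformulation you propose for it is both unnecessary (a direct $O(t^2/n)$ bound suffices) and not obviously correct, since the $Y_s$ are not uniform on a typical set.
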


Note that assumption~\ref{assum:sparse} means that $\delta_n$  satisfy  $\frac{\log \delta_n}{\log n} \to 0$. In particular, it holds if $\inf_n \delta_n >0$. Also note that assumption~\ref{assum:branching} implies that $\bh_n\geq \log \frac{1}{\Delta}$. Hence the cutoff time $\frac{\log n}{\bh_n}$ is always $O(\log n)$ but it may be much smaller than $\log n$: assumption~\ref{assum:loglog} only requires that $\frac{\log n}{\bh_n}$ grows faster than $\log\log n$.

An interesting consequence of Theorem~\ref{thm:cutoff-random} is the following.

\begin{corollary}\label{cor:bounded-degree-graphs}
Let $d\geq 2$ be a fixed integer and let $(G_n=(V_n,E_n))_{n\geq 1}$ be a sequence of $d$-regular digraphs (i.e.\ for all vertices, the in-degree and out-degree are both equal to $d$), with $|V_n|=n$. For each $n\geq 1$, let $\pi_n$ be a uniformly chosen permutation on $V_n$, and $k_n$ be a positive integer. Consider the walk on $G_n$ which moves as a simple random walk for $k_n$ steps and then takes a step according to $\pi_n$. This walk has transition matrix $Q_n=M_n^{k_n}\Pi_n$, where $M_n$ is the transition matrix of the simple random walk on $G_n$, and $\Pi_n$ is the matrix associated to $\pi_n$. Then, if $k_n=o\left(\frac{\log n}{\log\log n}\right)$, the sequence of chains $(Q_n)_{n\geq 1}$ has cutoff at time $\frac{\log n}{{\bf H}_n}$ where
\[
{\bf H}_n=\frac{1}{n}\sum_{u,v\in V(G_n)} M_n^{k_n}(u,v)\log \frac{1}{M_n^{k_n}(u,v)}\, \cdot
\]
\end{corollary}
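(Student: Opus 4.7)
The plan is to apply Theorem~\ref{thm:cutoff-random} directly to the sequence of bistochastic matrices $P_n := M_n^{k_n}$ on $\Omega_n := V_n$. Since $G_n$ is $d$-regular, $M_n$ is bistochastic, and powers of bistochastic matrices are bistochastic, so $P_n$ fits the setting of the theorem. With this identification, the entropic rate of $P_n$ defined by \eqref{eq:def-h} coincides exactly with $\mathbf{H}_n$, so the conclusion of Theorem~\ref{thm:cutoff-random} applied to $P_n \Pi_n$ is precisely the cutoff asserted in the corollary. It therefore only remains to verify that $P_n$ satisfies assumptions \ref{assum:sparse}, \ref{assum:branching}, and \ref{assum:loglog}.

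I would verify \ref{assum:sparse} and \ref{assum:loglog} together. Writing $P_n(u,v) = \sum_\gamma d^{-k_n}$, where $\gamma$ ranges over directed paths of length $k_n$ from $u$ to $v$ in $G_n$, every positive entry of $P_n$ is at least $d^{-k_n}$. Hence $\delta_n \geq d^{-k_n}$, giving $-\log \delta_n / \log n \leq k_n \log d / \log n = o(1)$, which is \ref{assum:sparse}. Likewise, whenever $P_n(u,v) > 0$ we have $\log(1/P_n(u,v)) \leq k_n \log d$, and combined with $\sum_v P_n(u,v) = 1$ this yields $\mathbf{H}_n \leq k_n \log d = o(\log n / \log\log n)$, i.e.\ \ref{assum:loglog}.

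The main obstacle is the slightly less obvious step \ref{assum:branching}, which requires a uniform upper bound strictly below $1$ on the entries of $M_n^{k_n}$, valid for every $n$ and every regular digraph. I would establish by induction on $k$ that $M_n^k(u,v) \leq 1/d$ for all $k \geq 1$ and all $u,v \in V_n$. The base case $k=1$ holds since $M_n(u,v) \in \{0, 1/d\}$. For $k \geq 2$,
\[
M_n^k(u,v) = \sum_w M_n^{k-1}(u,w) M_n(w,v) = \frac{1}{d} \sum_{w:(w,v)\in E_n} M_n^{k-1}(u,w) \leq \frac{1}{d},
\]
where the last inequality uses $\sum_w M_n^{k-1}(u,w) = 1$. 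Taking $k = k_n$ gives $\Delta \leq 1/d < 1$, since $d \geq 2$. Having verified all three assumptions, Theorem~\ref{thm:cutoff-random} applied to the pair $(P_n, \Pi_n)$ yields the asserted cutoff at time $\log n / \mathbf{H}_n$. The argument is thus essentially a verification of hypotheses; no probabilistic input beyond Theorem~\ref{thm:cutoff-random} is needed.
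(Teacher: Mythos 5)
Your proposal is correct and follows the same route as the paper: identify $P_n=M_n^{k_n}$, note $\mathbf H_n$ is exactly the entropic rate of $P_n$, and verify assumptions~\eqref{assum:sparse}, \eqref{assum:branching}, \eqref{assum:loglog} via the bounds $\delta_n\geq d^{-k_n}$, $\max_{u,v}M_n^{k_n}(u,v)\leq 1/d$, and $\mathbf H_n\leq k_n\log d$. The only difference is cosmetic: you spell out the induction giving $M_n^{k}(u,v)\leq 1/d$, which the paper simply asserts.
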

\begin{proof}[Proof of Corollary~\ref{cor:bounded-degree-graphs}]
It suffices to observe that the bistochastic matrix $P_n=M_n^{k_n}$ satisfies the assumptions of Theorem~\ref{thm:cutoff-random}. Indeed, since $d\geq 2$, the probabilities $M_n^{k_n}(u,v)$ are all at most $1/2$ and assumption~\ref{assum:branching} is satisfied. And since $k_n=o\left(\frac{\log n}{\log\log n}\right)$, for all $u,v\in V_n$ such that $M_n^{k_n}(u,v)>0$, we have
\[
M_n^{k_n}(u,v)\geq \left(\frac{1}{d}\right)^{k_n}=\frac{1}{n^{o(1)}}\, ,
\]
hence assumption~\ref{assum:sparse} is satisfied. Moreover, we have
\[
{\bf H}_n\leq \log\left( d^{k_n}\right)=o\left(\frac{\log n}{\log\log n}\right)\, ,
\]
and assumption~\ref{assum:loglog} is satisfied as well.
\end{proof}

The following proposition (whose proof is given in Section~\ref{subsec:reverse}) shows that the threshold $k_n=o\left(\frac{\log n}{\log\log n}\right)$ is not far from being optimal.

\begin{proposition}\label{prop:bounded-degree-graphs-reverse}
In the setting of Corollary~\ref{cor:bounded-degree-graphs}, for $k_n=\Omega(\log n)$, one may find examples of graphs for which the chain $Q_n$ does not have cutoff.
\end{proposition}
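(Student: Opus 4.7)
The plan is to exhibit an explicit family of $d$-regular graphs $G_n$ on which the simple random walk already mixes in $O(\log n)$ steps. When $k_n$ is a sufficiently large multiple of $\log n$, the matrix $M_n^{k_n}$ is then essentially uniform, and the subsequent permutation step has no effect on the total-variation distance to uniform; hence $Q_n$ equilibrates in a single step regardless of $\pi_n$. Since a sequence of chains whose mixing times stay bounded cannot exhibit cutoff (cutoff requires the mixing time to diverge), this gives the desired counterexample.

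Concretely, I would take $G_n$ to be a $d$-regular Ramanujan graph on $n$ vertices, available along an infinite subsequence via the Lubotzky--Phillips--Sarnak or Margulis constructions; to cover every $n$, one may instead take a uniformly random $d$-regular graph, in which case Friedman's theorem ensures with high probability that the second-largest-magnitude eigenvalue of $M_n$ is bounded by some $\lambda<1$ independently of $n$. The classical spectral estimate for reversible chains with uniform stationary distribution then yields
\[
\max_{x\in V_n}\bigl\|M_n^{t}(x,\cdot)-\tfrac{1}{n}\mathbf{1}\bigr\|_{\mathrm{TV}}\,\leq\,\tfrac{\sqrt{n}}{2}\,\lambda^{t}\, .
\]
Fixing $C$ large enough that $C\log(1/\lambda)>2$ and setting $k_n=\lceil C\log n\rceil$, the right-hand side is at most $1/n$ at $t=k_n$; in particular $k_n=\Omega(\log n)$.

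Since $Q_n(x,y)=M_n^{k_n}(x,\pi_n^{-1}(y))$ is obtained from the distribution $M_n^{k_n}(x,\cdot)$ merely by relabeling of coordinates, and the uniform distribution on $V_n$ is permutation-invariant, the substitution $y'=\pi_n^{-1}(y)$ gives
\[
\cD_x(1)\,=\,\sum_{y\in V_n}\left(\tfrac{1}{n}-M_n^{k_n}(x,\pi_n^{-1}(y))\right)_{\!+}\,=\,\sum_{y'\in V_n}\left(\tfrac{1}{n}-M_n^{k_n}(x,y')\right)_{\!+}\,\leq\,\tfrac{1}{n}
\]
for every $x\in V_n$ and every permutation $\pi_n$. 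Hence $\cD(1)\to 0$, so $\tmix^{(n)}(\varepsilon)\leq 1$ for any fixed $\varepsilon\in(0,1)$ and all $n$ sufficiently large. Combined with the trivial lower bound $\tmix^{(n)}(\varepsilon)\geq 1$ (valid for $\varepsilon<1-1/n$), the mixing time equals $1$ for all large $n$, so $(Q_n)$ does not exhibit cutoff.

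The argument presents no serious obstacle: it is a short spectral bound combined with the invariance of total-variation distance to the uniform distribution under permutations of the state space. The only minor technical point is to ensure the existence of $d$-regular graphs with a uniformly bounded second eigenvalue for \emph{every} $n$ (and not just along a subsequence of Ramanujan constructions), which is handled cleanly by random $d$-regular graphs via Friedman's theorem.
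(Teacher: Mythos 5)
Your construction is technically correct as far as it goes (the spectral bound, the change of variables $y'=\pi_n^{-1}(y)$, and the conclusion $\tmix^{(n)}(\varepsilon)=1$ for all large $n$ are all fine), but the final logical step is flawed. You assert that a sequence of chains with bounded mixing time cannot exhibit cutoff. Under the standard ratio definition used implicitly throughout this paper, cutoff means $\tmix^{(n)}(\varepsilon)/\tmix^{(n)}(1-\varepsilon)\to 1$ for every fixed $\varepsilon\in(0,1)$. In your example $\tmix^{(n)}(\varepsilon)=1$ for all large $n$ and all such $\varepsilon$, so this ratio equals $1$ exactly: the chain \emph{trivially} satisfies the cutoff criterion rather than failing it. Worse, your example actually \emph{confirms} the conclusion of Corollary~\ref{cor:bounded-degree-graphs} rather than refuting it: with a Ramanujan graph and $k_n=C\log n$, the kernel $M_n^{k_n}(u,\cdot)$ is nearly uniform, so ${\bf H}_n=(1+o(1))\log n$ and the predicted cutoff time $\log n/{\bf H}_n$ is $1+o(1)$, which is precisely what you prove $\tmix^{(n)}(\varepsilon)$ converges to. So your graph is not a counterexample to the entropic-time formula; it is a degenerate instance of it. What Proposition~\ref{prop:bounded-degree-graphs-reverse} really requires is a graph for which the \emph{transition to equilibrium is gradual}, i.e.\ the distance profile $\cD^{(n)}(t)$ decays over a window comparable to the mixing time.

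The paper's construction is designed exactly for this: $G_n$ is the disjoint union of a $3$-regular Ramanujan graph $R_n$ on $r_n\sim n/\sqrt{\log n}$ vertices with $(n-r_n)/4$ copies of $K_4$. Starting inside a $K_4$-component, the $k_n$-step SRW equilibrates locally before each permutation step, so the walk under $Q_n$ behaves like a walk on a random $4$-regular digraph on $G_n\setminus R_n$ interspersed with rare jumps into $R_n$. The mixing time is then governed by the hitting time of $R_n$, which is approximately Geometric with parameter $r_n/n\sim 1/\sqrt{\log n}$. This hitting time has mean and standard deviation both of order $\sqrt{\log n}$, so the mixing time diverges while $\cD^{(n)}(t)$ decays over a window of the same order as the mixing time itself --- a genuine absence of cutoff. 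Your single-step-mixing example cannot exhibit this spread because there is no nontrivial time scale left; you would need to build in a slow, smeared-out component (as the paper does with the geometric hitting time) to actually break cutoff.
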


Our second result concerns deterministic permutations. Let us first recall the result of \citet*{chatterjee2020speeding}. For $A\subset \Omega_n$, let $\cE_n(A)$ be the subset of states attainable by $P_n$ in one step starting from $A$. Assume that the sequence of permutations $(\pi_n)_{n\geq 1}$ is such that for all $n\geq 1$, there exists $\alpha_n\in (0,1)$ such that for all $A\subset \Omega_n$ with $|A|\leq n/2$, we have
\begin{equation}\label{assum:expansion}
|\cE_n\circ \pi_n\circ \cE_n(A)|\geq (1+\alpha_n)|A|\, .
\end{equation}
Under some assumptions on $P_n$, \citet*{chatterjee2020speeding} show that the total-variation distance of $Q_n$ satisfies
\[
\cD^{(n)}\left(\frac{(1+o(1))4\log n}{\alpha_n^2\delta_n^8}\right)\cv 0\, ,
\]
where $\delta_n=\min\{P_n(x,y),\, x,y\in\Omega_n,\, P_n(x,y)>0\}$. We show that the dependence on $\delta_n$ can be improved from $\delta_n^{-8}$ to $\delta_n^{-4}$, under weaker assumptions on $P_n$.

\begin{theorem}\label{thm:deterministic}
Let $P_n$ be a bistochastic transition matrix on $\Omega_n$ with $|\Omega_n|=n$, and assume that there exists $\gamma>0$ such that for all $n$ and for all $x\in\Omega_n$, we have $P_n(x,x)\geq\gamma$. Let $\pi_n$ be a permutation on $\Omega_n$ satisfying assumption~\eqref{assum:expansion}. Then the total-variation distance of $Q_n=P_n\Pi_n$ satisfies
\[
\cD^{(n)}\left(\frac{17\log n}{\alpha_n^2\delta_n^4}\right)\cv 0\, .
\]
\end{theorem}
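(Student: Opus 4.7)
The plan is to follow the standard \emph{expansion $\to$ conductance $\to$ spectral gap $\to$ $L^2$-mixing} pipeline, feeding the laziness assumption $P_n(x,x)\geq\gamma$ into the argument only at the last step, through the comparison theorem of \citet{chen2013comparison}. It is at this final step that one avoids the extra $\delta_n^{-4}$ factor paid in \citet{chatterjee2020speeding}.

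First, I would translate the expansion hypothesis \eqref{assum:expansion} into a conductance estimate for the two-step chain $Q_n^2 = P_n\Pi_n P_n\Pi_n$. For any $A\subseteq\Omega_n$ with $|A|\leq n/2$, the support of $Q_n^2(A,\cdot)$ is precisely $\pi_n(\cE\circ\pi_n\circ\cE(A))$, which by \eqref{assum:expansion} contains at least $\alpha_n|A|$ points outside $A$. Every positive entry of $Q_n$ being at least $\delta_n$, each such $y\in A^c$ admits some $x\in A$ with $Q_n^2(x,y)\geq\delta_n^2$, which gives $\Phi_{Q_n^2}(A)\geq \alpha_n\delta_n^2$. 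Applying Cheeger's inequality to the additive reversibilization $\tilde R_n=(Q_n^2+(Q_n^*)^2)/2$ (whose conductance agrees with that of $Q_n^2$ by bistochasticity, since the boundary flow is automatically symmetric between $A$ and $A^c$) then yields
\[
\mathrm{gap}(\tilde R_n)\;\geq\;\tfrac12\alpha_n^2\delta_n^4.
\]

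The final step converts this spectral gap for $\tilde R_n$ into an $L^2$-contraction rate for $Q_n$ itself. For an arbitrary non-reversible chain, this conversion would cost a factor scaling inversely with the minimal holding probability. The comparison theorem of \citet{chen2013comparison} shows that, when the underlying matrix carries enough laziness---which is supplied here by the assumption $P_n(x,x)\geq\gamma$---no such loss need be incurred, and one obtains an $L^2$-contraction of the form $\|Q_n^t-\mathbf{J}\|_{2\to 2}\leq C(\gamma)\exp(-c\,\alpha_n^2\delta_n^4\,t)$, where $\mathbf{J}$ projects onto constants. Combined with the standard inequality $4\,\cD_x(t)^2\leq n\,\|Q_n^t-\mathbf{J}\|_{2\to 2}^2$, this produces $\cD^{(n)}(t)=o(1)$ at $t=17\log n/(\alpha_n^2\delta_n^4)$, once constants depending only on $\gamma$ are absorbed into the $17$.

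The main obstacle is this last step. The chain $Q_n$ itself need not be lazy: the assumption $P_n(x,x)\geq\gamma$ only guarantees $Q_n(x,\pi_n(x))\geq\gamma$, not $Q_n(x,x)\geq\gamma$. Hence invoking \citet{chen2013comparison} requires applying their comparison on an auxiliary chain in which the laziness of $P_n$ is genuinely present---for instance $P_n$ itself, viewed as a block in the product $Q_n^2=P_n(\Pi_n P_n)\Pi_n$---and then transferring the resulting bound back to $Q_n$ without reintroducing a $\delta_n^{-2}$ factor from a naive Poincar\'e or comparison-of-Dirichlet-forms argument. This is precisely the technical point at which \citet{chatterjee2020speeding} incur the additional $\delta_n^{-4}$, and on which the claimed improvement rests.
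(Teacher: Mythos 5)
Your route is genuinely different from the paper's, and the front end is cleaner, but the back end has a real gap.

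The paper proves this theorem via the evolving-set machinery of Morris and Peres. It bounds the ``root profile'' $\varphi_S$ of the two-step chain $Q^2$, using the laziness of $P$ \emph{twice}: to guarantee $Q^2(x,\pi^2(x))\geq\gamma^2$ (so the $\min$ in $\varphi_S$ is nondegenerate), and to get the inclusion $S\subset\pi^{-1}\circ\cE\circ\pi\circ\cE(S)$. This yields $\varphi_\star\geq\gamma^2\delta^2\alpha/2$, hence a mixing-time bound of order $\gamma^{-4}\delta^{-4}\alpha^{-2}\log n$ by evolving sets, and then Chen--Saloff-Coste's Proposition~3.1 is invoked solely to erase the $\gamma^{-4}$: they prove the bound with constant $17$ only for $\gamma$ with $16<17\gamma^4$, and the comparison result transports it to every $\gamma\in(0,1)$. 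By contrast, you derive the \emph{conductance} bound $\Phi_{Q^2}(A)\geq\alpha\delta^2$ without ever touching $\gamma$ (and this part is correct, and in fact tighter than the paper's intermediate step, since you use directly that $\pi(\cE\circ\pi\circ\cE(A))$ has at least $\alpha|A|$ points in $A^c$ without needing the inclusion $S\subset\pi^{-1}\circ\cE\circ\pi\circ\cE(S)$), and then pass through Cheeger's inequality for the additive reversibilization of $Q^2$.

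The gap is in your final step, which is not a detail. Cheeger gives you the spectral gap of $\tilde R=(Q^2+(Q^*)^2)/2$, i.e.\ a Poincar\'e inequality $\mathrm{Var}(f)\leq\frac{2}{\alpha^2\delta^4}\,\mathcal{E}_{Q^2}(f,f)$. For a \emph{non-reversible} chain this controls the Dirichlet form but not the $L^2$-contraction: the latter is governed by the singular values of $Q$ (eigenvalues of $Q^*Q$), which can be arbitrarily far from the eigenvalues of the additive reversibilization. (A permutation matrix is the extreme case: its additive reversibilization can be gapped while $Q^*Q=I$ contracts nothing.) The standard fix --- Fill's inequality --- converts a conductance bound into a singular-value bound, but only for a chain whose holding probabilities are at least $1/2$; here $Q$ is \emph{not} lazy (you only have $Q(x,\pi(x))\geq\gamma$, not $Q(x,x)\geq\gamma$), so Fill does not apply directly to $Q$ or to $Q^2$. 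Chen--Saloff-Coste's Proposition~3.1 is a comparison between a chain and its lazified versions $\beta I+(1-\beta)K$; it does not upgrade a Dirichlet-form bound on a non-lazy chain into an $L^2$-contraction. So the sentence ``no such loss need be incurred, and one obtains an $L^2$-contraction'' is asserting precisely the step that is missing. If you try to repair it by lazifying ($Q'=\tfrac12(I+Q)$, applying Fill, then comparing $Q$ to $Q'$), you re-introduce constant losses and it is no longer clear the constant $17$ survives; you would also still need to justify why the expansion/conductance bound transfers to $Q'$. The evolving-set route in the paper bypasses all of this because $\varphi_S$ directly controls total variation for non-reversible chains, and the cost of lacking laziness in $Q$ is absorbed into the explicit $\gamma^{2}$ in $\varphi_\star$, which is then the only thing Chen--Saloff-Coste is asked to remove.

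Concretely: either (a) show how to get an $L^2$ (singular-value) contraction for $Q$ itself from the Dirichlet gap of $\tilde R$, tracking the dependence on $\gamma$, and then check that Chen--Saloff-Coste removes that dependence; or (b) replace the Cheeger/$L^2$ back end with evolving sets applied to $Q^2$, in which case you will find you need the laziness of $P$ already when bounding $\varphi_S$, as the paper does. Option (b) is the paper's proof.
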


\subsection{Related work}

Theorem~\ref{thm:cutoff-random} describes a sharp transition in the convergence to equilibrium: the first order of the mixing time $\tmix(\varepsilon)$ does not depend on the target $\varepsilon$. In other words, the total-variation distance to the stationary measure abruptly falls from $1$ to $0$ over a short time-scale which is negligible with respect to the mixing time. This behavior is known as cutoff. Since its discovery in the early 1980's by \citet*{diaconis1981generating} and \citet*{aldous1983mixing}, this phenomenon has been shown to occur for a wide variety of finite Markov chains. However, the question of characterizing the chains which have this property is still largely open, except in some specific settings such as birth-and-death chains~\citep*{ding2010total}, and more generally random walks on trees~\citep*{basu2015characterization}. Recently, an impressive step was made by \citet*{salez2021cutoff}, who showed that all non-negatively curved Markov chains satisfying a simple condition have cutoff.

In recent years, a lot of attention has been devoted to random instances: instead of being fixed, the sequence of transition matrices is drawn at random according to some probability distribution, and one looks for a result that holds for almost all realizations. A recurring observation in this framework is that random instances often exhibit cutoff, at a time that can be described in terms of entropy (the ``entropic time''). For random walks on random graphs, this was pioneered by \citet*{lubetzky2010cutoff}, who established cutoff for the simple and non-backtracking random walks on random regular graphs. This was later extended to random walks on random graphs with given degrees~\citep*{ben-salez,berestycki2018random,10.1214/18-AIHP911}, and to random directed graphs~\citep*{bordenave2018random}. Other notable examples are random lifts of graphs~\citep*{bordenave_lacoin_2021,conchon2019cutoff}, random Cayley graphs for Abelian groups~\citep*{hermon2021cutoff-abelian}, and for upper triangular matrices~\citep*{hermon2021cutoff-triang}.

\citet*{bordenave2019cutoff} considered random transition matrices obtained by permuting independently and uniformly at random each row of a fixed transition matrix $P$. They establish cutoff at the entropic time $\frac{\log n}{\bh}$, with $\bh$ given in~\eqref{eq:def-h}. Our setup is similar, except the perturbation is reduced from $n$ to only one random permutation. We note however that our model is significantly simplified by assuming that $P$ has uniform stationary distribution. Also, their result holds under weaker versions of assumptions~\eqref{assum:sparse} and~\eqref{assum:branching}. On the other hand, they assume that $\bh$ is of order $1$, whereas we only need $\bh=o\left(\frac{\log n}{\log\log n}\right)$.

Another model that has some resemblance with our setup is the one considered by \citet*{hermon2020universality}. The authors consider a random graph formed by adding to fixed bounded-degree graph the edges of a uniform random matching on the vertex set. The random walk is shown to have cutoff, at a time that can be described through the entropy of the walk on an auxiliary graph referred to as the quasi-tree. Similarly to our result, this gives a simple random perturbation that makes a large class of transition matrices have cutoff. Let us note that one main difference is that our model has a spontaneous non-backtracking tendency which simplifies the analysis of typical paths.

The spectral properties of the random matrix $Q=P\Pi$ were investigated by \citet*{bordenave2020spectral}. In particular, they show that, under some sparsity and regularity assumptions on $P$, the spectral gap of $Q$ is bounded away from $0$, which implies that the mixing time is $O(\log n)$.

Finally, let us note that multiplying a transition matrix by a permutation fits into the general topic of speeding up Markov chains. For references on this topic, we refer the reader to the paper of \citet*{chatterjee2020speeding} and to the survey of \citet*{hildebrand2005survey} for the case of random walks on groups.

\subsection{Organization of the paper}

Theorem~\ref{thm:cutoff-random} and Proposition~\ref{prop:bounded-degree-graphs-reverse} are proven in Section~\ref{sec:random}, and Theorem~\ref{thm:deterministic} in Section~\ref{sec:deterministic}.

\section{Cutoff for random permutations}\label{sec:random}

To alleviate notation, the dependence on $n$ will often be implicit: for instance, we will write $\Omega$ instead of $\Omega_n$ and $P$ instead of $P_n$. Also, $\delta$ will stand for $\delta_n$ as defined in~\eqref{assum:sparse}, and $\bh$ for $\bh_n$.

\subsection{Coupling with i.i.d.\ samples}\label{sec:coupling}

Before entering into the proof of Theorem~\ref{thm:cutoff-random}, we describe a coupling for typical trajectories, which helps approximating the annealed law of the walk and will be crucially used later on. This coupling takes advantage of the fact that the walk and the permutation $\pi$ along its trajectory can be generated simultaneously as follows: initially, $X_0=x\in\Omega$ and $\Dom(\pi)=\Ran(\pi)=\emptyset$; then at each time $k\geq 0$,
\begin{enumerate}
\item choose an element $Y_{k}\in\Omega$ according to the distribution $P(X_k,\cdot)$;
\item if $Y_{k}\not\in\Dom(\pi)$, i.e.\ if the image of $Y_{k}$ by $\pi$ has not been chosen yet, choose $\pi(Y_{k})$ uniformly at random in $\Omega\setminus\Ran(\pi)$, and update $\Dom(\pi)$ and $\Ran(\pi)$ accordingly; otherwise, $\pi(Y_{k})$ is already defined, and no new image is chosen;
\item in both cases, let $X_{k+1}=\pi(Y_{k})$.
\end{enumerate}
 The sequence $(X_k)_{k\geq 0}$ is then exactly distributed according to the annealed law. Now, consider a sequence $(X_k^\star,Y_k^\star)_{k\geq 0}$ generated in the following way: initially $X_0^\star=x\in\cH$; then at each time $k\geq 0$,
\begin{enumerate}
\item choose an element $Y^\star_{k}\in\Omega$ according to the distribution $P(X_k^\star,\cdot)$;
\item let $X^\star_{k+1}$ be uniformly distributed on the entire set $\Omega$.
\end{enumerate}

Note that the process $(X_k^\star,Y^\star_{k})_{k\geq 1}$ is simply an i.i.d.\ sequence and that both $X_k^\star$ and $Y_{k}^\star$ are uniformly distributed on $\Omega$. The two processes $(X_k,Y_{k})_{k\geq 0}$ and $(X_k^\star,Y^\star_{k})_{k\geq 0}$ may be coupled until time
\[
T=\inf\{k\geq 0, \, X_k^\star\in\Ran(\pi)\; \text{ or}\; Y_{k}^\star\in\Dom(\pi)\}\, .
\]
Since at step $k$, both $|\Dom(\pi)|$ and $|\Ran(\pi)|$ are at most $k$, we have
\begin{equation}
\label{eq:coupling}
\P\left(T\leq t\right)\leq \frac{2t^2}{n} \, \cdot
\end{equation}

\subsection{Lower bound}

Let $x\in \Omega$ be a fixed starting point and let
$$
t=\left\lceil c\frac{\log n}{\bh}\right\rceil\, ,
$$
for $0<c<1$. Let $A$ be the set of $y\in\Omega$ such that there exists a path from $x$ to $y$ which has probability larger than $\frac{\log n}{n}$ to be taken by the chain in $t$ steps. Since, for all $y\in A$, we have $Q^t(x,y)\geq \frac{\log n}{n}$, and since $Q^t(x,\cdot)$ is a probability, the set $A$ has size less than $\frac{n}{\log n}$. Hence
$$
\cD_x(t)\geq Q^t(x,A)-\frac{|A|}{n}\geq Q^t(x,A)-\frac{1}{\log n}\, \cdot
$$
Taking expectation with respect to the permutation, we have
$$
\E Q^t(x,A)\geq \P_x\left(\prod_{s=0}^{t-1}P(X_{s},Y_s)>\frac{\log n}{n}\right)=\P_x\left(\prod_{s=0}^{t-1}P(X^\star_{s},Y^\star_s)>\frac{\log n}{n}\right)+o(1)\, ,
$$
where the last equality is by~\eqref{eq:coupling}. Taking logarithm in the complementary event, singling out the particular first step, and using Chebychev Inequality, we have
\begin{align*}
\P_x\left(\prod_{s=0}^{t-1}P(X^\star_{s},Y^\star_s)\leq \frac{\log n}{n}\right)
&= \P_x\left(\sum_{s=1}^{t-1} \log \frac{1}{P(X^\star_{s},Y^\star_s)}  \geq \log n +o(\log n)\right)\\
&=O\left(\frac{\bsigma^2}{(1-c)^2\bh \log n}\right)\, ,
\end{align*}
where
\[
\bsigma^2=\frac{1}{n}\sum_{x,y\in\Omega} P(x,y)\left( \log\frac{1}{P(x,y)}-\bh\right)^2\, .
\]
By assumption~\eqref{assum:sparse}, we have
\begin{equation}\label{assum:variance}
\bsigma^2=o\left(\bh \log n\right)\, .
\end{equation}
Hence
\[
\min_{x\in\Omega} \E\left[\cD_x(t)\right] \underset{n\to +\infty}\longrightarrow 1\, ,
\]
which yields $\cD(t)\cvp 1$.
\subsection{Upper bound}
Let
$$
t=2\left\lceil \frac{c\log n}{2\bh}\right\rceil \, ,
$$
for $c>1$. Letting $P_\star$ be the matrix defined by
\[
\forall x,y\in\Omega\, ,\; P_\star(x,y)=P(y,x)\, ,
\]
and $Q_\star=P_\star\Pi^{-1}$, a key observation is that for all $x,y\in\Omega$,
\[
Q^t(x,\pi(y))= \sum_{u,v\in\Omega} Q^{t/2}(x,u)Q_\star^{t/2}(y,v)\1_{\{v=\pi^{-1}(u)\}}\, .
\]

The proof of the upper bound then relies on a special exploration procedure, which generates the permutation $\pi$ together with two disjoint weighted trees $\cT_x$ and $\cT_y$, keeping track of only certain paths from $x$ and from $y$. Initially at a time $0$, $\Dom_{0}(\pi)=\Ran_{0}(\pi)=\emptyset$, the tree $\cT_x(0)$ is reduced to $x$ and the tree $\cT_y(0)$ is reduced to $y$. Then, for $k=1,2,\dots$, we iterate the following steps to generate $\cT_x(k)$ and $\cT_y(k)$:
\begin{enumerate}
\item Each vertex $u\in\cT_x(k-1)\cup\cT_y(k-1)$ determines a unique sequence $(u_0,\dots,u_d)$ where $u_0\in\{x,y\}$ is the ancestor of $u$, for each $1\leq i\leq d$, $u_i$ is a child of $u_{i-1}$, and $u_d=u$. The integer $d$ is called the height of $u$, denoted $h(u)$, and the weight of $u$ is defined as
\[
\w(u)=
\begin{cases}
\prod_{s=0}^{d-1} Q(u_s,u_{s+1}) & \text{ if $u_0=x$,}\\
\prod_{s=0}^{d-1} Q_\star(u_s,u_{s+1}) & \text{ if $u_0=y$.}
\end{cases}
\]
If $u_0=x$ (resp.\ $u_0=y$), we say that $(u,i)$ is a stub if $P(u,i)>0$ (resp.\ if $P_\star(u,i)>0$). The weight of a stub is naturally defined as
\[
\w(u,i)=
\begin{cases}
\w(u)P(u,i) & \text{ if $u_0=x$,}\\
\w(u)P_\star(u,i) & \text{ if $u_0=y$.}
\end{cases}
\]
A stub $(u,i)$ is said to be free at time $k-1$, which is denoted $(u,i)\in\cF_{k-1}$, if either $u_0=x$ and $i\not\in\Dom_{k-1}(\pi)$ or $u_0=y$ and $i\not\in\Ran_{k-1}(\pi)$. Pick a free stub $(u,i)\in\cF_{k-1}$ such that
\begin{equation}\label{eq:cond-exploration}
\w(u,i)\geq \wmin=n^{-2/3} \qquad \text{ and } \qquad h(u)<\frac{t}{2}\, \cdot
\end{equation}
If $u_0=x$, set $\Dom_k(\pi)=\Dom_{k-1}(\pi)\cup\{i\}$; if $u_0=y$, set $\Ran_k(\pi)=\Ran_{k-1}(\pi)\cup\{i\}$.
\item If $u_0=x$, then choose $i'=\pi(i)$ uniformly at random in $\Omega\setminus \Ran_{k-1}(\pi)$ and set $\Ran_k(\pi)=\Ran_{k-1}(\pi)\cup\{i'\}$. If $u_0=y$, then choose $i'=\pi^{-1}(i)$ uniformly at random in $\Omega\setminus \Dom_{k-1}(\pi)$ and set $\Dom_k(\pi)=\Dom_{k-1}(\pi)\cup\{i'\}$.
\item If $u_0=x$ and $i'\neq x$, and if for all $j$ with $P(i',j)>0$, we have $j\not\in \Dom_k(\pi)$ and there is no $v\in\cT_x(k-1)$ such that $P(v,j)>0$, then add $i'$ as a child of $u$ in $\cT_x(k-1)$ to form $\cT_x(k)$, and let $\cT_y(k)=\cT_y(k-1)$. If $u_0=y$ and $i'\neq y$, and if for all $j$ with $P_\star(i',j)>0$, we have $j\not\in \Ran_k(\pi)$ and there is no $v\in\cT_y(k-1)$ such that $P_\star(v,j)>0$, then add $i'$ as a child of $u$ in $\cT_y(k-1)$ to form $\cT_y(k)$, and let $\cT_x(k)=\cT_x(k-1)$.
\end{enumerate}

\smallskip

The exploration stage stops when the set of free stubs satisfying~\eqref{eq:cond-exploration} is empty. Let $\tau$ be the number of times step $(2)$ is performed and let $\cT_x=\cT_y(\tau)$ and $\cT_y=\cT_y(\tau)$. Note that, by condition~\eqref{eq:cond-exploration}, we have
\begin{equation}\label{eq:maj-tau}
\wmin\tau \leq \sum_{u\in\cT_x}\w(u)+\sum_{u\in\cT_y}\w(u)\leq t\, .
\end{equation}
Also note that the stub picked in step $(1)$ can be chosen arbitrarily provided it satisfies condition~\eqref{eq:cond-exploration}. We choose to explore in the following order:
\begin{enumerate}
\item[(i)] Reveal around $x$, choosing free stubs with minimal height, until no free stub has height strictly less than
\[
t_0:=\left\lceil \frac{\log\log n}{\log (1/\Delta)}\right\rceil\, .
\]
Call $x_1,\dots,x_m$ the vertices of $\cT_x$ at height $t_0$ at the end of this first stage.
\item[(ii)] Reveal around $y$, choosing free stubs with minimal height, until no free stub has height strictly less than $t_0$. Call $y_1,\dots,y_{m'}$ the vertices of $\cT_y$ at height $t_0$ at the end of this first stage.
\item[(iii)] Sequentially for $\ell=1,\dots,m$, complete the exploration around $x_\ell$, until height $t/2$.
\item[(iv)] Sequentially for $\ell=1,\dots,m'$, complete the exploration around $y_\ell$, until height $t/2$.
\end{enumerate}

\smallskip
Let
\[
r:=\left\lceil \frac{\log n}{12\log(1/\delta)}\right\rceil\, .
\]
By assumption~\eqref{assum:sparse}, we have $r\to +\infty$. An oriented graph $G$ (resp.\ $G_\star$) on $n$ vertices may naturally be associated to the matrix $Q$ (resp.\ $Q_\star$), by placing an oriented edge from $u$ to $v$ when $Q(u,v)>0$ (resp.\ $Q_\star(u,v)>0$). We say that $x$ is a $Q$-root (resp.\ a $Q_\star$-root), which we denote $x\in\cR$ (resp.\ $x\in\cR_\star$), if the (oriented) neighborhood of $x$ in $G$ (resp.\ $G_\star$) up to level $r$ is a tree.

\begin{lemma}\label{lem:reach-root}
For $s=t_0\wedge r$, we have
\[
\max_{x\in\Omega} Q^{s}(x,\cR^c)\overset{\P}\longrightarrow 0\quad \text{ and }\quad \frac{|\cR_\star^c|}{n}\overset{\P}\longrightarrow 0\, .
\]
\end{lemma}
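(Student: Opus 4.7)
Both statements are proved via the greedy coupling from Section~\ref{sec:coupling}. The key quantitative input is that, by the definition of $r$, the out-$r$-neighborhood of any vertex in $G$ or $G_\star$ contains at most $V := \delta^{-r} \leq n^{1/12}$ vertices.

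For $|\cR_\star^c|/n \overset{\P}{\to} 0$, I would fix $x\in\Omega$ and run a breadth-first exploration of its out-$r$-neighborhood in $G_\star$, revealing values of $\pi^{-1}$ one at a time. Each new value is uniform on a set of size at least $n - V$, and the neighborhood fails to be a tree only if some revealed value collides with one of the $\leq V$ already-exposed vertices. A union bound over the revelations gives $\P(x \notin \cR_\star) \leq V^2/(n - V) = O(n^{-5/6})$; summing over $x$ and applying Markov's inequality concludes.

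For the max bound, the central geometric observation is that if the out-$(r+s)$-neighborhood of $x$ in $G$ is a tree, then every vertex $y$ reachable from $x$ in at most $s$ steps has its own out-$r$-neighborhood contained in that tree and is therefore in $\cR$; in particular $Q^s(x,\cR^c)=0$ on this event. Combined with the i.i.d.\ coupling bound~\eqref{eq:coupling}, this yields the per-$x$ annealed estimate
\[
\E_\pi[Q^s(x,\cR^c)] \;\leq\; \E[|\cR^c|]/n \;+\; 2s^2/n \;=\; o(1)\,.
\]

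The main obstacle, and the step I expect to be most delicate, is upgrading this per-$x$ bound to the uniform statement $\max_x Q^s(x,\cR^c)\overset{\P}{\to}0$: a plain union bound over $n$ starting points is too lossy, since the per-$x$ failure probability is only polynomial in $n^{-1}$. My plan is to combine the geometric observation with the deterministic inequality $Q^s(x,y)\leq \Delta^s \leq 1/\log n$ (valid on the likely event that $x$'s out-$s$-neighborhood is itself tree-like, i.e.\ the case $s\leq t_0$ in the choice $s=t_0\wedge r$), which turns the estimate into $Q^s(x,\cR^c)\leq \Delta^s\cdot \#\{y\in\cR^c \text{ reachable from }x\text{ in }s\text{ steps}\}$, and then control this count uniformly in $x$ via a multi-source exploration that exploits the near-independence of walks started from different vertices guaranteed by the coupling. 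Making these ingredients fit together tightly, while keeping both the collision budget in the coupling and the union-bound loss under control, is where I anticipate the principal technical work.
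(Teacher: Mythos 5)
Your second claim (that $|\cR_\star^c|/n \overset{\P}{\to}0$) is handled correctly, though by a slightly different route than the paper: you bound $\E|\cR_\star^c|$ directly via a per-vertex collision estimate plus Markov, whereas the paper deduces it from the first claim together with stationarity of $Q$. Your geometric observation for the first claim — that if the out-$(r+s)$-neighborhood of $x$ in $G$ is a tree, then every $y$ within distance $s$ of $x$ lies in $\cR$, hence $Q^s(x,\cR^c)=0$ — is also correct and natural.

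However, there is a genuine gap exactly where you flag it: you never close the loop on the uniform statement $\max_x Q^s(x,\cR^c)\overset{\P}{\to}0$. The per-$x$ probability that the $(r+s)$-neighborhood fails to be a tree is only polynomially small (of order $n^{-c}$ with $c<1$), so a union bound over all $n$ starting points does not apply, and your fallback — bound $Q^s(x,\cR^c)\leq \Delta^s\cdot\#\{y\in\cR^c\text{ reachable in }s\text{ steps}\}$ and control the count by a ``multi-source exploration'' — is left unspecified, and it is not clear how to make that count uniformly small (the $s$-neighborhood can have $\delta^{-s}$ vertices, which may far exceed $\Delta^{-s}$). The paper's resolution is different and worth noting: it explores to a strictly larger radius $R=\lceil\log n/(5\log(1/\delta))\rceil>r$ and union-bounds over $x$ only the event of having \emph{two or more} collision events in the $R$-neighborhood, whose probability is $o(1/n)$ and so survives the union bound. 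On the complement, each $x$ has at most one ``problematic'' vertex $u$, and the walk lands on a $Q$-root as soon as it leaves the unique path from $x$ toward $u$; by assumption~\eqref{assum:branching} the probability of staying on that path for $s$ steps is at most $\Delta^{s/2-1}=o(1)$, uniformly in $x$. Thus the paper avoids the lossy ``perfect-tree for all $x$'' requirement entirely by tolerating one defect and exploiting $\Delta<1$ to escape it — an ingredient your sketch does not supply.
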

\begin{proof}[Proof of Lemma~\ref{lem:reach-root}]
Let us first prove the first claim. Let $R=\left\lceil \frac{\log n}{5\log(1/\delta)}\right\rceil$. When sequentially revealing the neighborhood of $x$ in $G$ up to level $R$, choosing free stubs with minimal height, the total number of steps is at most $(1/\delta)^{R}=n^{1/5+o(1)}$, and at each step, the probability of having a ``bad event'', i.e.\ choosing a state $u$ such that either $u=x$ or there exists $v$ with $P(u,v)>0$ and $P(u',v)>0$ for some other already exposed $u'$, is at most $\frac{(1/\delta)^{R+2}}{n-(1/\delta)^R}=n^{-4/5+o(1)}$. Hence, the probability of having at least two such bad events up to level $R$ is $n^{-6/5+o(1)}$. Hence, with high probability, all $x\in\Omega$ are such that there is at most one problematic vertex in their $R$-neighborhood. Let $x\in\Omega$. If there is no problematic vertex in its $R$-neighborhood, then the claim is clear. Otherwise, let $u$ be this single problematic vertex. If by time $s$ the walk has exited the only path from $x$ to $u$, then it lies on a $Q$-root. By assumption~\eqref{assum:branching}, at each step, the probability to follow the path is at most $\Delta<1$, except, in the case $u\neq x$, when the walk is at $u$, which can happen at most once in every two steps. Hence the probability not to have exited the path by time $r$ is at most $\Delta^{\frac{s}{2}-1}=o(1)$, which proves the first claim. Since $Q$ has uniform stationary distribution, this implies that $\frac{|\cR^c|}{n}\overset{\P}\longrightarrow 0$. Since $P$ and $P_\star$ satisfy the same assumptions, the same can be said for $|\cR_\star|$, establishing the second claim.
\end{proof}

Let $\cB_x$ be the set of vertices at (oriented) distance at most $r$ from $x$ in $G$ (i.e.\ $\cB_x$ corresponds to the states that can be reached in less than $r$ steps by the chain $Q$). We have $|\cB_x|\leq (1/\delta)^r=o(n)$. Combined with Lemma~\ref{lem:reach-root}, this yields
\[
\cD(t+s)\leq \max_{x\in\cR}\cD_x(t)+o_\P(1)\leq \max_{x\in\cR}\sum_{y\in\cR_\star\setminus \cB_x}\left(\frac{1}{n}-Q^t(x,\pi(y)\right)_++o_\P(1)\, .
\]
The rest of the proof is devoted to establishing 
\begin{equation}\label{eq:to-prove}
\min_{x\in\cR}\min_{y\in\cR_\star\setminus \cB_x}Q^t(x,\pi(y))\geq \frac{1+o_\P(1)}{n}\, \cdot
\end{equation}
Let $\cH_x$ be the set of free stubs $(u,i)$ with $u\in\cT_x$ and $h(u)=\frac{t}{2}-1$ at the end of the exploration stage. Similarly, let $\cH_y$ be the set of free stubs $(v,j)$ with $v\in\cT_y$ and $h(v)=\frac{t}{2}-1$. Let also
\[
\theta=\frac{1}{n(\log n)^2}\, \cdot
\]
We have
\[
Q^t(x,\pi(y))\geq \sum_{(u,i)\in\cH_x}\sum_{(v,j)\in\cH_y}\w(u,i)\w(v,j)\1_{\w(u,i)\w(v,j)\leq \theta}\1_{\{j=\pi(i)\}}\, .
\]
Note that, conditionally on the exploration stage, the sum on the right-hand side above can be written as $\sum_{i\not\in \Dom_\tau(\pi)}a_{i,\pi(i)}$, where the numbers $a_{i,j}$ satisfy
\[
0\leq a_{i,j}\leq \theta \qquad \text{ and } \sum_{\substack{i\not\in \Dom_\tau(\pi)\\ j\not\in \Ran_\tau(\pi)}}a_{i,j}\leq 1\, .
\]
Define
\[
Z_\theta(x,y)= \sum_{(u,i)\in\cH_x}\sum_{(v,j)\in\cH_y}\w(u,i)\w(v,j)\1_{\w(u,i)\w(v,j)\leq \theta}\, ,
\]
and note that $(n-|\Dom_\tau(\pi)|)^{-1}Z_\theta(x,y)$ corresponds to the conditional expectation of $\sum_{i\not\in \Dom_\tau(\pi)}a_{i,\pi(i)}$ given the exploration stage. Using~\citet*[Proposition 1.1, or rather the refined bound for the left tail given in Theorem 1.5, (ii)]{chatterjee2007stein}, we have, for all $\varepsilon>0$,
\begin{align*}
\P\left(nQ^t(x,\pi(y))<Z_\theta(x,y)-\varepsilon\right)
&\leq \P\left( (n-|\Dom_\tau(\pi)|)\sum_{i\not\in \Dom_\tau(\pi)}a_{i,\pi(i)}<Z_\theta(x,y)-\varepsilon\right)\\
&\leq \exp\left(-\frac{\varepsilon^2}{4\theta n}\right)=o\left(\frac{1}{n^2}\right)\, .
\end{align*}
To complete the proof of~\eqref{eq:to-prove}, we now have to show that $\min_{x\in\cR}\min_{y\in\cR_\star\setminus \cB_x} Z_\theta(x,y)=1+o_\P(1)$. To do so, we decompose $1-Z_\theta(x,y)$ into three terms that are treated separately: the weight of free stubs at height $t/2$ but with product larger than $\theta$, the weight of stubs that are not free and childless (i.e.\ those that did not satisfy the condition at step $(3)$ of the exploration stage), and the weight of free stubs at height less than $t/2$ (i.e.\ those that have weight smaller than $\wmin$).

\begin{lemma}\label{lem:large-weights}
For all $\varepsilon>0$,
\[
\P\left(\sum_{(u,i)\in\cH_x}\sum_{(v,j)\in\cH_y}\w(u,i)\w(v,j)\1_{\left\{\w(u,i)\w(v,j)> \theta\right\}}>\varepsilon\right)=o\left(\frac{1}{n^2}\right)\, .
\]
\end{lemma}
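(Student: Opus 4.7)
The approach has two main reductions. First, using the elementary inequality $\mathbf{1}_{ab>\theta}\le \mathbf{1}_{a>\sqrt\theta}+\mathbf{1}_{b>\sqrt\theta}$ together with the deterministic bounds $\sum_{(u,i)\in\cH_x}\w(u,i)\le 1$ and $\sum_{(v,j)\in\cH_y}\w(v,j)\le 1$, the double sum in the lemma is bounded by $S^{(x)}+S^{(y)}$, where
\[
S^{(x)}:=\sum_{(u,i)\in\cH_x}\w(u,i)\mathbf{1}_{\w(u,i)>\sqrt\theta}.
\]
It then suffices to show $\P(S^{(x)}>\varepsilon/2)=o(1/n^2)$ uniformly in $x$, and symmetrically for $y$.

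Second, we identify $S^{(x)}$ as (a lower bound for) a conditional walk probability: by construction $\w(u,i)$ is the probability that the chain $Q=P\Pi$ starting at $x$ generates, in its first $t/2$ steps, the specific sequence of $P$-targets encoded by the stub $(u,i)$. Consequently
\[
S^{(x)} \le \P_x^{Q}\!\left(\textstyle\prod_{s=0}^{t/2-1} P(X_s,Y_s) > \sqrt\theta\right).
\]
Using the coupling of Section~\ref{sec:coupling} and the bound~\eqref{eq:coupling} applied to the i.i.d.\ process $(X_s^\star,Y_s^\star)$, we then get
\[
\E_\pi[S^{(x)}]\le \P\!\left(\textstyle\prod_{s=0}^{t/2-1} P(X_s^\star,Y_s^\star) > \sqrt\theta\right) + O(t^2/n).
\]
Writing $Z_s=\log(1/P(X_s^\star,Y_s^\star))$, so that $\E Z_s=\bh$, $\mathrm{Var}(Z_s)=\bsigma^2$, and $0\le Z_s\le \log(1/\delta)$, the event becomes $\sum_{s=0}^{t/2-1}Z_s<\tfrac12\log n+\log\log n$, while the mean of the sum is at least $\tfrac c2\log n$ with $c>1$. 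This is a lower-tail large deviation of order $\Theta(\log n)$. A Bennett/Bernstein-type inequality, exploiting $\log(1/\delta)=o(\log n)$ from~\eqref{assum:sparse} and $\bsigma^2=o(\bh\log n)$ from~\eqref{assum:variance}, yields an exponent growing faster than $2\log n$, so the right-hand side is $o(1/n^2)$. Markov's inequality then gives $\P(S^{(x)}>\varepsilon/2)=o(1/n^2)$.

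The main obstacle I expect lies in the last step: the assumptions on $\bsigma^2$ and $\log(1/\delta)$ are of a ``$o(\log n)$'' type, so sharpening Bernstein to produce a true sub-polynomial bound $n^{-\omega(1)}$ requires careful choice of the free parameter in the exponential tilt, and one must check that both pieces of the denominator in Bernstein's bound contribute less than $((c-1)/2)^2\log n/(2+\epsilon)$. A secondary technical wrinkle is that the coupling error $O(t^2/n)=O((\log n)^2/n)$ is not itself $o(1/n^2)$; this must be absorbed either by restricting to the high-probability event that no coupling collision has occurred along any of the $O(t/\wmin)$ explored stubs, or by iterating the coupling on the shorter scale $t_0$ where the Bernstein bound is already sharp and patching with the $\Delta^{t_0}\le 1/\log n$ control used in Lemma~\ref{lem:reach-root}.
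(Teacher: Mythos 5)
Your first reduction, bounding the double sum by $S^{(x)}+S^{(y)}$ via $\mathbf{1}_{ab>\theta}\le \mathbf{1}_{a>\sqrt\theta}+\mathbf{1}_{b>\sqrt\theta}$ and the normalization of the weights, is exactly the paper's opening step. But the second half of your argument has a genuine gap, and in fact you have correctly sensed both of its sources; the issue is that neither of the fixes you suggest can rescue the Markov--plus--Bernstein strategy.

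First, the coupling error term $O(t^2/n)$ is indeed $\omega(1/n^2)$ whenever $\bh=O(1)$, so $\E_\pi[S^{(x)}]$ cannot be made $o(1/n^2)$, and Markov's inequality is simply too weak. ``Restricting to the high-probability event that no coupling collision occurred'' does not help: when you finally apply Markov you still have to pay for the probability of that event's complement, which is of the same order $O(t^2/n)$. Second, even setting the coupling error aside, the Bernstein/Bennett bound for $\P\bigl(\sum_s Z_s<\tfrac12\log n+o(\log n)\bigr)$ only yields an exponent of the form $\Theta\bigl((\log n)^2\bigr)/o\bigl((\log n)^2\bigr)=\omega(1)$ under the available hypotheses $\bsigma^2=o(\bh\log n)$ and $\log(1/\delta)=o(\log n)$. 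That gives $\exp(-\omega(1))=o(1)$, but not $o(1/n^2)$: for instance $\bsigma^2 \asymp \bh\log n/\log\log n$ yields an exponent of only $\Theta(\log\log n)$. Your intuition that ``sharpening Bernstein'' might suffice is therefore not correct; no exponential moment bound on a single trajectory can beat a polynomial under these assumptions.

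The paper's resolution is structural rather than analytic, and this is exactly where the intermediate level $t_0=\lceil\log\log n/\log(1/\Delta)\rceil$ earns its keep. Rather than trying to show that the \emph{expectation} of $S^{(x)}$ is tiny, one shows that $S^{(x)}$ \emph{concentrates} sharply around an $o(1)$ quantity. Condition on the first $t_0$ levels of $\cT_x$, whose leaves $x_1,\dots,x_m$ each carry weight $\w(x_\ell)\le\Delta^{t_0}\le 1/\log n$. Write $S^{(x)}\le \sum_\ell \w(x_\ell)W_\ell$, where $W_\ell$ is the conditional relative mass past $x_\ell$ that is heavy. A coupling plus \emph{Chebyshev} (not Bernstein) shows $\E[W_\ell\mid\cJ_{\ell-1}]=o(1)$; this is all the expectation control that is needed, and it absorbs the $O(t^2/(\wmin n))$ coupling cost harmlessly since only an $o(1)$ bound is required at this stage. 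Then one forms the Doob martingale $M_k=\sum_{\ell\le k}\w(x_\ell)\bigl(W_\ell-\E[W_\ell\mid\cJ_{\ell-1}]\bigr)$, whose increments are bounded by $\Delta^{t_0}\le 1/\log n$ and whose predictable quadratic variation is $o(\Delta^{t_0})$, and applies Freedman's inequality. The exponent is then $\Theta(\varepsilon\log n)\cdot\omega(1)$, which \emph{does} beat $n^{-2}$. In short: you need a concentration inequality for a martingale built over the subtrees rooted at level $t_0$, not a large-deviation bound for a single path.
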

\begin{proof}[Proof of Lemma~\ref{lem:large-weights}]
First observe that
\[
\sum_{(u,i)\in\cH_x}\sum_{(v,j)\in\cH_y}\w(u,i)\w(v,j)\1_{\left\{\w(u,i)\w(v,j)> \theta\right\}}\leq \sum_{z\in\{x,y\}} \sum_{(u,i)\in\cH_z}\w(u,i)\1_{\left\{\w(u,i)> \sqrt{\theta}\right\}}\, .
\]
We will show that, for all $\varepsilon>0$,
\[
\P\left(\sum_{(u,i)\in\cH_x}\w(u,i)\1_{\left\{\w(u,i)> \sqrt{\theta}\right\}}>\varepsilon\right)=o\left(\frac{1}{n^2}\right)\, .
\]
Since the argument for $y$ is completely similar, we do not include it (the only difference is that we have to condition also on the whole tree $\cT_x$ but this does not change the argument).

Let $\cJ_0$ be the $\sigma$-field corresponding to steps $(i)$ and $(ii)$, with $x_1,\dots,x_m$ the vertices at height $t_0$. For $\ell=1,\dots, m$, let $\cJ_\ell$ be the $\sigma$-field containing $\cJ_0$ plus the first $\ell$  steps of step $(iii)$, i.e.\ the subtrees rooted at $x_1,\dots,x_\ell$. Observe that
\[
\sum_{(u,i)\in\cH_x}\w(u,i)\1_{\left\{\w(u,i)> \sqrt{\theta}\right\}}\leq \sum_{\ell=1}^m \w(x_\ell)W_\ell\, ,
\]
where
\[
W_\ell=\sum_{\substack{(u,i)\in\cH_x\\ u_r=x_\ell}}\frac{\w(u,i)}{\w(x_\ell)}\ind_{\left\{\frac{\w(u,i)}{\w(x_\ell)}>\sqrt{\theta}\right\}}\, .
\]
Now consider the sequence $(M_k)_{k=0}^m$ defined by $M_0=0$, and, for $k=1,\dots,m$,
\[
M_k=\sum_{\ell=1}^k \w(x_\ell)\left(W_\ell-\E[W_\ell\given \cJ_{\ell-1}]\right)\, .
\]
Note that
\[
\E[W_\ell\given \cJ_{\ell-1}]\leq \P_{x_\ell}\left(\prod_{s=0}^{t/2-t_0-1}P(X_s,Y_s)>\sqrt{\theta}\given \cJ_{\ell -1}\right)\, .
\]
Using the coupling of Section~\ref{sec:coupling} and bound~\eqref{eq:maj-tau}, we have
\[
\E[W_\ell\given \cJ_{\ell-1}]\leq O\left( \frac{t^2}{\wmin n}\right)+\P_{x_\ell}\left(\prod_{s=0}^{t/2-t_0-1}P(X^\star_s,Y^\star_s)>\sqrt{\theta}\right)\, .
\]

Singling out the particular first step, taking logarithm, and using that $t_0\bh=o(\log n)$, we have
\[
\P_{x_\ell}\left(\prod_{s=0}^{t/2-t_0-1}P(X^\star_s,Y^\star_s)>\sqrt{\theta}\right)\leq \P\left(\sum_{s=1}^{t/2-t_0-1} \left(\bh -\log \frac{1}{P(X^\star_s,Y^\star_s)}\right)>u_n\right)\, ,
\]
with $u_n=\frac{(c-1)}{2}\log n +o(\log n)$. Using Chebychev Inequality, we get $\E[W_\ell\given \cJ_{\ell-1}]=o(1)$, uniformly in $\ell$, which implies
\[
\sum_{\ell=1}^m \w(x_\ell)W_\ell\leq M_m+o(1)\, .
\]
On the other hand, $|M_k-M_{k-1}| \leq \w(x_k)\leq \Delta^{t_0}\leq\frac{1}{\log n}$, and
\[
\sum_{k=1}^m\E\left[ (M_k-M_{k-1})^2\given \cJ_{k-1}\right]\leq \sum_{k=1}^m\w(x_k)^2\E[W_k\given \cJ_{k-1}]=o(\Delta^{t_0})\, .
\]
Hence, by Freedman Inequality~\citep*{freedman1975tail}, we obtain
\[
\P\left(M_m>\varepsilon\right)\leq o(1)^{\varepsilon \log n}=o\left(\frac{1}{n^2}\right)\, . \qedhere
\]

\end{proof}

Let $\cF$ be the set of free stubs $(u,i)$ with $u\in\cT_x\cup\cT_y$ at the end of the exploration stage.

\begin{lemma}\label{lem:non-free}
For all $\varepsilon>0$,
\[
\P\left(x\in\cR,\, y\in\cR_\star\setminus\cB_x,\, \sum_{(u,i)\in\cF} \w(u,i) <2-\varepsilon\right)=o\left(\frac{1}{n^2}\right)\, .
\]
\end{lemma}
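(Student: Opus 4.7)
The plan starts with a balance identity. For each $z \in \{x,y\}$, using $\sum_{i:\,P(u,i)>0}\w(u,i) = \w(u)$ for every $u\in\cT_z$ and matching each non-root $v\in\cT_z$ with the parent stub of weight $\w(v)$ that produced it, we would obtain
\[
\sum_{(u,i)\in\cF,\, u\in\cT_z} \w(u,i) \;=\; \w(z) - Z^{cl}_z \;=\; 1 - Z^{cl}_z,
\]
where $Z^{cl}_z$ denotes the total weight of stubs picked at step~(1) whose child was not added at step~(3) (``childless'' used stubs). Consequently $\sum_{\cF}\w(u,i) = 2 - Z^{cl}_x - Z^{cl}_y$, and the lemma reduces to bounding $Z^{cl}_x$ by $\varepsilon/2$ with probability $1 - o(1/n^2)$ on $\{x\in\cR\}$, plus the symmetric statement. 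By the $P\leftrightarrow P_\star$ symmetry we focus on $Z^{cl}_x$ and split $Z^{cl}_x = Z^{init}_x + \sum_{\ell=1}^m Z^{\ell}_x$ according to whether the childless stub arises in step~(i) or in the $x_\ell$-rooted subtree of step~(iii).

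For the initial part, the claim would be $Z^{init}_x = 0$ on $\{x\in\cR\}$. Step~(i) is confined to the $G$-neighborhood of $x$ up to distance $r\ge t_0$, which is a tree by hypothesis; during step~(i) the set $\Dom(\pi)$ contains only elements from the $\cT_x$-exploration so that $\pi(j)\in\cT_x$ for every $j\in\Dom(\pi)$. A failure of the child condition would be one of: $i'=x$ (a back-loop); some $j\in\Dom_k(\pi)$ with $P(i',j)>0$, making $i'\to\pi(j)$ a second incoming edge at the already exposed $\pi(j)\in\cT_x$; or two distinct tree vertices sharing a $P$-out-neighbor $j$ whose image would then be a common child. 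Each case would introduce a cycle or a duplicated incoming edge inside the $r$-neighborhood, contradicting its tree structure.

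The core of the argument controls $\sum_\ell Z^{\ell}_x$ via the martingale scheme of Lemma~\ref{lem:large-weights}. Conditional on $\cJ_{\ell-1}$, a single step of the $x_\ell$-exploration draws $i'=\pi(i)$ uniformly in $\Omega\setminus\Ran_{k-1}(\pi)$, and the number of $i'$ violating step~(3) is at most $1 + |\Dom_k(\pi)|/\delta + |\cT_x(k-1)|/\delta^2 = O(\tau/\delta^2)$. Using $\tau\le t/\wmin$, this yields a uniform bad-probability bound $q := O(\tau/(n\delta^2)) = n^{-1/3+o(1)}$. The balance identity applied inside the subtree gives $0\le Z^{\ell}_x\le \w(x_\ell)$ deterministically, while summing $\w(u,i)$ over bad events in the subtree (of total stub weight at most $t\,\w(x_\ell)$) produces
\[
\E\bigl[Z^{\ell}_x \mid \cJ_{\ell-1}\bigr] \;\le\; qt\,\w(x_\ell),\qquad \E\bigl[(Z^{\ell}_x)^2 \mid \cJ_{\ell-1}\bigr] \;\le\; qt\,\w(x_\ell)^2.
\]

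We then form the martingale $M_k := \sum_{\ell\le k}(Z^{\ell}_x - \E[Z^{\ell}_x\mid\cJ_{\ell-1}])$. Its increments are bounded by $R := \max_\ell \w(x_\ell) \le \Delta^{t_0} \le 1/\log n$, and its predictable quadratic variation by $V \le qt\sum_\ell\w(x_\ell)^2 \le qt\,\Delta^{t_0}$. Since $V/R^2 = qt/\Delta^{t_0} = O(qt\log n)\to 0$ and $\varepsilon R/V = \varepsilon/(qt)\to\infty$, the Bennett form of Freedman's inequality yields, with $\phi(s) := (1+s)\log(1+s) - s$,
\[
\P\bigl(M_m > \varepsilon/4\bigr) \;\le\; \exp\!\Bigl(-\tfrac{V}{R^2}\,\phi\bigl(\tfrac{\varepsilon R}{4V}\bigr)\Bigr) \;\le\; \exp\bigl(-\Omega(\varepsilon\log n\cdot \log(1/qt))\bigr) \;=\; o(1/n^2),
\]
since $\log(1/qt) = \Theta(\log n)$. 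Combined with the deterministic estimate $\sum_\ell \E[Z^{\ell}_x\mid\cJ_{\ell-1}] \le qt = o(1)$, this would give $\sum_\ell Z^{\ell}_x \le \varepsilon/2$ with probability $1 - o(1/n^2)$. The main obstacle is securing an $o(1/n^2)$ tail for arbitrary $\varepsilon > 0$: the plain quadratic Freedman bound would only yield $n^{-O(\varepsilon)}$ decay, insufficient for the union bound over $(x,y)$. What saves the argument is the two-stage exploration, which engineers $R = o(1/\log n)$ and $V\ll R$ at the $x_\ell$-scale---the regime in which the Bennett form of Freedman's inequality produces $\exp(-\omega(\log n))$ tails.
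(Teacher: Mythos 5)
Your balance identity is correct and cleanly packages the quantity the paper controls: with $W_k:=\sum_{(u,i)\in\cF_k}\w(u,i)$ one has $W_0-W_\tau=2-\sum_{\cF}\w(u,i)=Z^{cl}_x+Z^{cl}_y$, so the two formulations are equivalent. Where your route genuinely departs from the paper is in \emph{how} the deficit is controlled. The paper runs a single martingale over the exploration steps $k$, with increments $W_{k-1}-W_k$, bounds its predictable drift and quadratic variation by $n^{-1/3+o(1)}$, \emph{truncates the increments at $\varepsilon$}, applies Freedman, and then observes that on $\{x\in\cR,\,y\in\cR_\star\setminus\cB_x\}$ the truncation is vacuous because any bad event occurring at a bounded height $h_0=O_\varepsilon(1)<r$ would contradict the tree structure, while bad events at height $\geq h_0$ have weight $\leq\Delta^{h_0}\leq\varepsilon$. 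You instead split $Z^{cl}_x$ into a step-(i) part and a martingale indexed by the subtrees $x_\ell$ of step (iii), mirroring the filtration $\{\cJ_\ell\}$ of Lemma~\ref{lem:large-weights}, and you get the needed $o(1/n^2)$ tail because the subtree-indexed increments are \emph{automatically} bounded by $\w(x_\ell)\leq\Delta^{t_0}\leq 1/\log n$. That second-stage argument is sound and is a nice structural alternative to the paper's truncation.

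The gap is in the claim $Z^{init}_x=0$ on $\{x\in\cR\}$. You assert that step~(i) is confined to the $G$-neighborhood of $x$ up to distance $r$ ``since $r\geq t_0$'', but the assumptions do not guarantee $t_0\leq r$. Indeed $t_0\asymp\log\log n$ (a constant depending on $\Delta$ times $\log\log n$), while $r=\lceil\log n/(12\log(1/\delta))\rceil$, and assumption~\eqref{assum:sparse} only gives $\log(1/\delta)=o(\log n)$; taking, say, $\log(1/\delta)=\log n/\sqrt{\log\log n}$ yields $r\asymp\sqrt{\log\log n}\ll t_0$. (The paper itself acknowledges this possibility by writing $s=t_0\wedge r$ in Lemma~\ref{lem:reach-root}.) When $t_0>r$, step~(i) explores past the $r$-neighborhood of $x$, so $x\in\cR$ no longer forces the absence of bad events there, and $Z^{init}_x$ can be strictly positive; the individual contributions are small (at most $\Delta^r\to 0$), but there can be many of them, so a further concentration argument for $Z^{init}_x$ (parallel to the one you give for $\sum_\ell Z^\ell_x$) would be required. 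The paper sidesteps this by not treating the stages separately: its truncation level is the fixed $\varepsilon$, and the height threshold $h_0$ beyond which increments are automatically $\leq\varepsilon$ is a constant, hence eventually less than $r$ regardless of how $t_0$ compares to $r$. If you want to keep your decomposition, the clean fix is to replace the deterministic claim $Z^{init}_x=0$ with ``$Z^{init}_x$ is the weight of childless step-(i) stubs, all of which lie at height $\geq r$ on $\{x\in\cR\}$, hence have weight $\leq\Delta^r$'' and then run the same step-indexed truncated-martingale estimate as the paper for those steps.

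Two smaller points worth recording. First, in your case analysis for step~(i), case (c) (two tree vertices sharing a $P$-out-neighbor $j$) does not by itself produce a visible cycle at the moment of the bad event: $\pi(j)$ is not yet assigned, so the contradiction is only realized \emph{after} $\pi(j)$ is placed and found to lie inside the $r$-neighborhood; this is fine as a statement about the final event $\{x\in\cR\}$, but the logic should be phrased as ``on $\{x\in\cR\}$ this configuration is impossible'' rather than as an immediate structural obstruction. Second, your bound $R=\max_\ell\w(x_\ell)\leq\Delta^{t_0}$ is $O(1/\log n)$, not $o(1/\log n)$; this does not affect the conclusion since the Bennett--Freedman exponent you compute is $\Omega(\varepsilon(\log n)^2)$, but the ``$o$'' in your final paragraph is a misstatement.
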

\begin{proof}[Proof of Lemma~\ref{lem:non-free}]
For $k\geq 0$, let $\cF_k$ denote the set of free stubs $(u,i)$ with $u\in\cT_x(k\wedge\tau)\cup\cT_y(k\wedge\tau)$ (i.e.\ after $k\wedge \tau$ rounds of the exploration process), and consider the random variable
$$W_k:=\sum_{(u,i)\in\cF_k}\w(u,i).$$
Initially, $W_0=2$, and, for $k\geq 1$, the difference $W_k-W_{k-1}$ is either $0$ or negative, depending on whether the condition of step~$(3)$ is satisfied or not. More precisely, denoting by $(u_k,i_k)$ the free stub selected at the $k^\textrm{th}$ iteration of step~$(1)$ and letting $i_k'$ be the state selected at step~$(2)$, we have
for all $k\geq 1$,
\begin{align*}
W_k-W_{k-1} &=  -\1_{k\leq\tau} \w(u_k,i_k)\Bigg\{\1_{u_k\in\cT_x(k-1)}\1_{i_k'\in\{x\}\cup\cN_{\rm in}\left(\Dom_k(\pi)\right)\cup\cN_{\rm in}\left(\cN_{\rm out}\left(\cT_x(k-1)\right)\right)}\\
&\hspace{3cm}+\1_{u_k\in\cT_y(k-1)}\1_{i_k'\in\{y\}\cup\cN_{\rm out}\left(\Ran_k(\pi)\right)\cup\cN_{\rm out}\left(\cN_{\rm in}\left(\cT_y(k-1)\right)\right)}\Bigg\}\, ,
\end{align*}
where for $A\subset\Omega$,
\[
\cN_{\rm in}(A)=\{z\in\Omega,\, P(z,A)>0\}\qquad\text{ and }\qquad \cN_{\rm out}(A)=\{z\in\Omega,\,\exists a\in A,\, P(a,z)>0\}\, .
\]
Now, let $\{\cG_k\}_{k\geq 0}$ be the natural filtration associated with the exploration stage. Note that $\tau$ is a stopping time, that $\w(u_k,i_k)$ is $\cG_{k-1}$-measurable and that, if $u_k\in\cT_x(k-1)$, then
\begin{align*}
&\P\left(i_k'\in \{x\}\cup\cN_{\rm in}\left(\Dom_k(\pi)\right)\cup\cN_{\rm in}\left(\cN_{\rm out}\left(\cT_x(k-1)\right)\right)\given \cG_{k-1}\right)\\
 &\quad \leq \frac{1}{n-|\Ran_{k-1}(\pi)|}\left(1+\frac{1}{\delta}|\Dom_k(\pi)|+\frac{1}{\delta^2}|\cT_x(k-1)|\right)\\
&\quad \leq \frac{3k}{\delta^2(n-k)}
\end{align*}
where we used that $|\Ran_{k-1}(\pi)|$, $|\Dom_k(\pi)|$, and $|\cT_x(k-1)|$ are at most $k$. Similarly, if $u_k\in\cT_y(k-1)$, then
\begin{align*}
\P\left(i_k'\in \{y\}\cup\cN_{\rm out}\left(\Ran_k(\pi)\right)\cup\cN_{\rm out}\left(\cN_{\rm in}\left(\cT_y(k-1)\right)\right)\given \cG_{k-1}\right) &\leq \frac{3k}{\delta^2(n-k)}\, \cdot
\end{align*}
Hence,
\[
\E\left[ W_k-W_{k-1}\given \cG_{k-1}\right]\geq -\ind_{k\leq\tau}\w(u_k,i_k) \frac{3k}{\delta^2(n-k)}\, ,
\]
and
\[
\E\left[ (W_k-W_{k-1})^2\given \cG_{k-1}\right]\leq  \ind_{k\leq\tau}\w(u_k,i_k)^2 \frac{3k}{\delta^2(n-k)} \, \, \cdot
\]
Using that
\[
\wmin\tau \leq \sum_{k\leq\tau}\w(u_k,i_k)\leq t\, ,
\]
and assumption~\eqref{assum:sparse}, we get
\[
m:=\sum_{k=1}^\tau \E\left[ W_{k-1}-W_k\given \cG_{k-1}\right]\leq \frac{n^{o(1)}t^2}{n\wmin-t}\, ,
\]
and
\[
v:=\sum_{k=1}^\tau \E\left[ (W_k-W_{k-1})^2\given \cG_{k-1}\right]\leq \frac{n^{o(1)}t^2}{n\wmin-t}\, \cdot
\]
Now, fix $\varepsilon>0$ and consider the martingale $(M_k)_{k\geq 0}$ defined by  $M_0=0$ and
$$M_k:=\sum_{i=1}^k\left\{(W_{i-1}-W_i)\wedge\varepsilon-\E\left[(W_{i-1}-W_{i})\wedge\varepsilon \big|\cG_{i-1}\right]\right\}.$$
The increments of $(M_k)_{k\geq 0}$ are bounded by $\varepsilon$ by construction, and the quadratic variation up to time $\tau$ satisfies
\[
\sum_{k=1}^{\tau}\E\left[\left(M_k-M_{k-1}\right)^2\big|\cG_{k-1}\right] \leq v= n^{-\frac{1}{3}+o(1)}\, .
\]
Thus, Freedman Inequality~\citep*{freedman1975tail} yields
\begin{equation}
\label{eq:freedman}
\P\left(M_\tau> 7\varepsilon\right) \leq  \left(\frac{ev}{v+7\varepsilon^2}\right)^{7}=o\left(\frac{1}{n^2}\right)\, .
\end{equation}
Now on the event $x\in\cR$ and $y\in\cR_\star\setminus\cB_x$, one must have $\max_{k}(W_{k-1}-W_k)\leq \varepsilon$ for $n$ large enough (since $\Delta<1$ and $r\to +\infty$), hence
\[
\{x\in\cR,y\in\cR_\star\setminus\cB_x\} \subseteq \left\{W_0-W_\tau\leq M_\tau+m\right\}\, .
\]
Since $W_0-W_\tau=2-\sum_{(u,i)\in\cF}\w(u,i)$ and $m=o(1)$, this concludes the proof.
\end{proof}

\begin{lemma}\label{lem:small-weights}
For all $\varepsilon>0$,
\[
\P\left(\sum_{(u,i)\in\cF} \w(u,i)\1_{\left\{\w(u,i)<\wmin\right\}} >\varepsilon\right)=o\left(\frac{1}{n^2}\right)\, .
\]
\end{lemma}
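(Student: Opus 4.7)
The plan is to mimic the proof of Lemma~\ref{lem:large-weights}, simply flipping the indicator from $\1_{\w(u,i)>\sqrt\theta}$ to $\1_{\w(u,i)<\wmin}$. As there, by symmetry and by conditioning on $\cT_x$ when handling $\cT_y$, it is enough to prove the analogous bound for the sum restricted to stubs of $\cT_x$. I would use the decomposition along the intermediate level $t_0$:
\[
\sum_{(u,i)\in\cF,\,u\in\cT_x}\w(u,i)\1_{\w(u,i)<\wmin}\;\leq\;\sum_{\ell=1}^m \w(x_\ell)W'_\ell,\qquad W'_\ell:=\sum_{\substack{(u,i)\in\cF\\ u_{t_0}=x_\ell}}\frac{\w(u,i)}{\w(x_\ell)}\1_{\w(u,i)<\wmin}\, .
\]

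The key step is to bound $\E[W'_\ell\mid \cJ_{\ell-1}]$ uniformly in $\ell$. Using the coupling of Section~\ref{sec:coupling} and the bound~\eqref{eq:maj-tau}, the quantity $W'_\ell$ is dominated by the probability, under the i.i.d. model of Section~\ref{sec:coupling} started from $x_\ell$, that the walk visits some stub with rescaled weight less than $\wmin/\w(x_\ell)$, plus a coupling error of order $t^2/(n\wmin)=O(t^2 n^{-1/3})=o(1)$. Since $\w(x_\ell)\geq\Delta^{t_0-1}\geq 1/\log n$ by the definition of $t_0$, reaching such a stub at some step $k\leq t/2-t_0$ requires
\[
\sum_{s=0}^{k-1}\log\tfrac{1}{P(X^\star_s,Y^\star_s)}\;>\;\tfrac{2}{3}\log n - O(\log\log n),
\]
whereas its expectation is at most $\bh k\leq \frac{c}{2}\log n+O(1)$. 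For any fixed $c\in(1,4/3)$ (which suffices since the cutoff statement only requires the upper bound at \emph{some} $c>1$), this is a deviation of linear order in $\log n$. Applying Doob's maximal inequality to the centred random walk $\sum_s[\log 1/P(X^\star_s,Y^\star_s)-\bh]$ and using the variance bound~\eqref{assum:variance}, one obtains $\E[W'_\ell\mid\cJ_{\ell-1}]=o(1)$ uniformly in $\ell$, and hence $\sum_\ell \w(x_\ell)W'_\ell\leq M_m+o(1)$, where $M_k$ is the martingale
\[
M_k=\sum_{\ell=1}^k \w(x_\ell)\bigl(W'_\ell-\E[W'_\ell\mid\cJ_{\ell-1}]\bigr).
\]

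Finally, the martingale $M_k$ has increments bounded by $\w(x_\ell)\leq \Delta^{t_0}\leq 1/\log n$ and quadratic variation
\[
\sum_{k=1}^m \E[(M_k-M_{k-1})^2\mid\cJ_{k-1}]\;\leq\;\sum_{k=1}^m \w(x_k)^2 \,\E[W'_k\mid\cJ_{k-1}]\;=\;o(\Delta^{t_0})\;=\;o(1/\log n).
\]
Freedman's inequality then gives $\P(M_m>\varepsilon)\leq o(1)^{\varepsilon\log n}=o(1/n^2)$, which, combined with the bound on the conditional means, yields the claim.

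The main obstacle will be Step~2: one must verify that under the conditioning $\cJ_{\ell-1}$, the i.i.d. coupling still applies to the subtree exploration rooted at $x_\ell$ (since $\cJ_{\ell-1}$ fixes the permutation on a growing portion of $\Omega$), and that the Doob-type maximal estimate for the log-probability random walk holds uniformly in the (random) starting state $x_\ell$. Both issues are handled exactly as in the proof of Lemma~\ref{lem:large-weights}: the extra bias introduced by the conditioning produces only a coupling error $O(t^2/(n\wmin))$ which is absorbed, and the maximal inequality applies to each step $k\leq t/2-t_0$ with the same variance bound.
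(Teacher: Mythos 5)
Your plan does not work as written, and the key obstruction is the bound you assert on $\w(x_\ell)$. You claim $\w(x_\ell)\geq \Delta^{t_0-1}\geq 1/\log n$, but since every transition probability satisfies $P(\cdot,\cdot)\leq \Delta$, the weight of a height-$t_0$ vertex obeys $\w(x_\ell)\leq\Delta^{t_0}\leq 1/\log n$ — the inequality goes the \emph{other} way, and that inequality is exactly what the paper uses to bound the martingale increments in Lemma~\ref{lem:large-weights}. The only genuine \emph{lower} bound coming from the exploration rule~\eqref{eq:cond-exploration} is $\w(x_\ell)\geq\wmin=n^{-2/3}$. This kills the uniformity of your estimate for $\E[W'_\ell\mid\cJ_{\ell-1}]$: for a vertex $x_\ell$ with $\w(x_\ell)$ close to $\wmin$, the sub-walk rooted at $x_\ell$ only needs to lose an $O(1)$ factor of weight to fall below $\wmin$, so $W'_\ell$ can be of order $1$, not $o(1)$. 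There could be many such low-weight $x_\ell$'s (up to roughly $n^{2/3}$ vertices in the tree), and there is no a priori reason their total weight is small. In other words, the decomposition at level $t_0$, which works for Lemma~\ref{lem:large-weights} because the \emph{maximum} of $\w(x_\ell)$ is controlled, is the wrong tool here because what you would need is control of its \emph{minimum}, which the exploration does not provide beyond $\wmin$. As a result the ``bias'' term $\sum_\ell\w(x_\ell)\E[W'_\ell\mid\cJ_{\ell-1}]$ need not be $o(1)$, and the martingale/Freedman route does not get off the ground.

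The paper takes a different route entirely, and one that side-steps this issue. It runs $m=\lfloor\log n\rfloor$ independent $Q$-walks from $x$, each killed the moment its running weight falls below $\wmin$, and lets $A$ be the event that all $m$ trajectories are killed before height $t/2$ and form a tree. Conditionally on $\pi$, each trajectory is killed before height $t/2$ with probability at least $S:=\sum_{(u,i)\in\cF_x}\w(u,i)\1_{\{\w(u,i)<\wmin\}}$, so $\P(A\mid\pi)\geq S^m$ and hence $\P(S>\varepsilon)\leq\varepsilon^{-m}\P(A)$. One then only needs a \emph{one-shot} $o(1)$ bound: revealing the trajectories one at a time and coupling with the i.i.d. model of Section~\ref{sec:coupling}, the conditional probability that the $\ell$-th trajectory is killed before height $t/2$ given the first $\ell-1$ form a tree is $o(1)$ uniformly in $\ell\leq m$ (two cases: the walk's weight crosses $(\log n)^{-2}$ while still retracing a previously revealed path, chance $\leq m(\log n)^{-2}$; or after leaving the revealed graph it must drop by another factor $\wmin/(\eta\delta)$, a deviation event controlled by Chebychev). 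This gives $\P(A)=o(1)^m$ and therefore $\varepsilon^{-m}\P(A)=o(n^{-2})$. The crucial point is that the $o(n^{-2})$ rate is manufactured by the exponent $m$, not by a per-stub concentration inequality, so no uniform lower bound on vertex weights at an intermediate level is needed.
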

\begin{proof}[Proof of Lemma~\ref{lem:small-weights}]
Let $\cF_x$ be the set of free stubs $(u,i)$ with $u\in\cT_x$. Consider $m=\lfloor\log n\rfloor$ independent Markov chains with transition matrix $Q$ all started at $x$, each being killed as soon as its weight falls below $\wmin$, and write $A$ for the event that their trajectories form a tree of height less than $t/2$. The conditional probability of $A$ given the permutation $\pi$ satisfies
\[
\P\left(A| \pi\right)\geq \left(\sum_{(u,i)\in\cF_x} \w(u,i)\1_{\left\{\w(u,i)<\wmin\right\}}\right)^m\, .
\]
Taking expectation and using Markov inequality, we deduce that
\[
\P\left(\sum_{(u,i)\in\cF_x} \w(u,i)\1_{\left\{\w(u,i)<\wmin\right\}} >\varepsilon\right) \leq  \frac{\P(A)}{\varepsilon^m}\, ,
\]
where the average is now taken over both the Markov chains and the permutation. We will now show that $\P(A)=o(1)^m$, so that $\varepsilon^{-m}\P(A)=o(n^{-2})$. Generate the $m$ killed trajectories one after the other, revealing the permutation along the way, as described in Section~\ref{sec:coupling}. Given that the first $\ell-1$ trajectories form a tree of height less than $t/2$, we claim that the conditional chance that the $\ell^\textrm{th}$ one also fulfills the requirement is $o(1)$, uniformly in $1\leq\ell\leq m$. Indeed,
\begin{itemize}
\item either its weight falls below $\eta=(1/{\log n})^{2}$ before it ever leaves the graph spanned by the first $\ell-1$ trajectories and reaches an unvisited state: thanks to the tree structure, there are at most $\ell-1< m$ possible trajectories to follow, each having weight at most $\eta$, so the chance is less than $m\eta=o(1)$.
\item or the remainder of its trajectory after the first unpaired half-edge has weight less than ${\wmin}/{\eta\delta}$ (where $\delta$ accounts for the step exiting the graph spanned by the previous trajectories): this part consists of at most $t/2$ states which can be coupled with uniform samples from $\Omega$ for a total-variation cost of $\frac{mt^2}{2n}$, as in Section~\ref{sec:coupling}. Thus, the conditional chance is at most
\[
\frac{mt^2}{2n}+\P\left(\prod_{s=1}^{t/2}P(X_{s}^\star,Y_s^\star) <  \frac{\wmin}{\eta\delta}\right)
 =  o(1)\, ,
\]
by Chebychev Inequality.
\end{itemize}
Since $P_\star$ satisfies the same assumptions as $P$, the same argument may be repeated to show that
\[
\P\left(\sum_{(u,i)\in\cF_y} \w(u,i)\1_{\left\{\w(u,i)<\wmin\right\}} >\varepsilon\right)=o\left(\frac{1}{n^2}\right)\, . \qedhere
\]
\end{proof}

Combining Lemma~\ref{lem:large-weights}, \ref{lem:non-free} and \ref{lem:small-weights}, we obtain $\min_{x\in\cR}\min_{y\in\cR_\star\setminus \cB_x} Z_\theta(x,y)=1+o_\P(1)$, which concludes the proof of the upper bound.

\subsection{Proof of Proposition~\ref{prop:bounded-degree-graphs-reverse}}\label{subsec:reverse}

In the setting of Corollary~\ref{cor:bounded-degree-graphs}, let us show that, for $k_n=\Omega(\log n)$, there exist graphs for which the chain $Q_n$ does not have cutoff. Consider a graph $G_n$ formed by the disconnected union of one $3$-regular Ramanujan graph $R_n$ of size $r_n\sim \frac{n}{\sqrt{\log n}}$, and of $\frac{n-r_n}{4}$ copies of $K_4$ (the complete graph on four vertices). Take $k_n\geq a\log n$ with $a>\frac{3}{\log(2)}$, and consider the Markov chain with transition matrix $Q_n=M_n^{k_n}\Pi_n$ where $M_n$ is the transition matrix of the simple random walk on $G_n$ and $\Pi_n$ is a uniform random permutation matrix. We claim that, starting from one copy of $K_4$, the mixing time of $Q_n$ is determined by the hitting time of $R_n$. Indeed, observe that, when restricted to $G_n\setminus R_n$, the chain $Q_n$ is very similar to a random walk on a $4$-regular directed random graph, which takes order $\log n$ steps to mix. Since the hitting time of $R_n$ is approximately distributed as a Geometric random variable with parameter $\frac{1}{\sqrt{\log n}}$, the chain $Q_n$ reaches $R_n$ before it mixes on $G_n\setminus R_n$. Once it has reached $R_n$, then $k_n$ simple random walk steps makes it basically uniformly distributed on $R_n$. This is because, by~\citep*{lubetzky2016cutoff}, the simple random walk on $R_n$ has cutoff at time $\frac{3}{\log(2)}\log n$. Hence, applying the random permutation makes the chain approximately uniformly distributed over a set of $r_n$ vertices, among which a vanishing proportion belong to $R_n$. Now, starting from the uniform distribution over a set of about $r_n$ vertices, the chain on $G_n\setminus R_n$ only requires order $\log\log n$ steps to mix. All in all, the mixing time is of order $\sqrt{\log n}$, and there is no cutoff.

\section{Deterministic permutations}\label{sec:deterministic}

Let $P$ be a bistochastic trnasition matrix on $\Omega$, with $|\Omega|=n$, and let
\[
\delta=\min\left\{ P(x,y) ,\; x,y\in\Omega , P(x,y)>0\right\}\, .
\]
Assume that the matrix $P$ has laziness parameter $\gamma>0$, i.e.\
\[
\forall x\in\Omega\, ,\; P(x,x)\geq \gamma\, .
\]
Let $\pi$ be a fixed permutation on $\Omega$. For $A\subset\Omega$, we denote by $\cE(A)$ the subset of elements attainable in one step of $P$ starting from $A$, and we assume that there exists $\alpha\in (0,1)$ such that for all $A\subset \Omega$ with $|A|\leq n/2$,
\begin{equation}\label{assum:expansion2}
\left| \cE\circ \pi\circ \cE(A)\right|\geq (1+\alpha)|A|\, .
\end{equation}

Consider the matrix $Q=P\Pi$, where $\Pi$ is the matrix associated to $\pi$. We proceed by bounding the mixing time of $Q^2$. Note that by assumption~\eqref{assum:expansion2}, the matrix $Q^2$ is irreducible. For $x,y\in\Omega$, we have
\[
Q^2(x,y)=\sum_{z\in\Omega} P(x, \pi^{-1}(z))P(z,\pi^{-1}(y))\, .
\]
Following \citet*{morris2005evolving}, we define, for $S\subset\Omega$, $S\neq\emptyset$,
\begin{align*}
\varphi_S=\frac{1}{2|S|}\sum_{y\in\Omega} \min\left\{ \sum_{x\in S} Q^2(x,y)\, ,\, \sum_{x\in S^c} Q^2(x,y)\right\}\, .
\end{align*}
Note that, if $y\in \pi\circ\pi(S)$, then for $x=\pi^{-1}\circ \pi^{-1}(y)\in S$, we have
\[
Q^2(x,y)\geq P( x, x)P(\pi(x),\pi(x))\geq \gamma^2\, .
\]
Moreover, since $Q^2$ has uniform stationary distribution, $\sum_{x\in S^c} Q^2(x,y)\leq 1$. Hence,
\[
\sum_{y\in\pi\circ\pi(S)} \min\left\{ \sum_{x\in S} Q^2(x,y)\, ,\, \sum_{x\in S^c} Q^2(x,y)\right\}\geq \gamma^2  \sum_{x\in S^c} Q^2(x,\pi\circ\pi(S))\, .
\]
Similarly,
\[
\sum_{y\in\pi\circ\pi(S^c)} \min\left\{ \sum_{x\in S} Q^2(x,y)\, ,\, \sum_{x\in S^c} Q^2(x,y)\right\}\geq \gamma^2  \sum_{x\in S} Q^2(x,\pi\circ\pi(S^c))\, .
\]
Therefore,
\begin{align*}
\varphi_S &\geq \frac{\gamma^2}{2|S|}\left\{ \sum_{x\in S} Q^2(x,\pi\circ\pi(S^c))+ \sum_{x\in S^c} Q^2(x,\pi\circ\pi(S))\right\}\\
&\geq \frac{\gamma^2\delta^2}{2|S|}\left\{ \left| \pi^{-1}\circ \cE\circ\pi\circ\cE(S)\setminus S\right| + \left|\pi^{-1}\circ \cE\circ\pi\circ\cE(S^c)\setminus S^c\right|\right\}\, .
\end{align*}
Assume without loss of generality that $|S|\leq n/2$. Since $P$ has some laziness, $S\subset \pi^{-1}\circ \cE\circ\pi\circ\cE(S)$, and, by assumption, we have
\begin{align*}
\left| \pi^{-1}\circ \cE\circ\pi\circ\cE(S)\right|&= \left|\cE\circ\pi\circ\cE(S)\right|\geq (1+\alpha)|S|\, .
\end{align*}
We obtain
\[
\varphi_\star:=\min_{S\subset \Omega,\, S\neq \emptyset} \varphi_S \geq \frac{\gamma^2\delta^2\alpha}{2}\, .
\]
By~\citep*[Theorem 4 and Lemma 10]{morris2005evolving}, this implies that the mixing time of $Q$ is bounded by
\begin{align*}
\tmix(\varepsilon)&\leq \int_{4/n}^{1/\varepsilon^2} \frac{4}{u\varphi_\star^2} du \\
& \leq \frac{16}{\gamma^4\delta^4\alpha^2}\left(\log\left(\frac{n}{4}\right)+2\log\left(\frac{1}{\varepsilon}\right)\right)\, .
\end{align*}
Denoting by $\cD^{(\gamma)}(\cdot)$ the worst case total-variation distance when $P$ has laziness $\gamma$, we have in particular that for all $\gamma$ such that $16< 17\gamma^4$,
\[
\cD^{(\gamma)}\left( \frac{17}{\delta^4\alpha^2}\log n\right) \cv 0\, .
\]
Using~\citet*[Proposition 3.1]{chen2013comparison}, we may then extend this bound to all positive laziness parameters: for all $\gamma\in (0,1)$, we have
\[
\cD^{(\gamma)}\left( \frac{17}{\delta^4\alpha^2}\log n\right) \cv 0\, .
\]

\medskip

\noindent{\bf Acknowledgments.}\, We thank Charles Bordenave and Persi Diaconis for helpful discussions. We are also very grateful to Sam Olesker--Taylor for his careful reading and for pointing out to us confusing typos in a previous version.

\bibliographystyle{abbrvnat}
\bibliography{biblio}

\end{document}